\documentclass[11pt,centertags]{amsart}
\usepackage{amscd}
\usepackage{easybmat}
\usepackage{mathrsfs}
\usepackage{amsfonts}
\usepackage{color}
\usepackage{pifont}
\usepackage{upgreek}
\usepackage{bm}
\usepackage{hyperref}
\usepackage{shorttoc}
\usepackage{amsmath,amstext,amsthm,a4,amssymb,amscd}
\usepackage[mathscr]{eucal}
\usepackage{mathrsfs}
\usepackage{epsf}
\textwidth 15.8cm

\topmargin 0.5cm
\oddsidemargin 0.1cm
\evensidemargin 0.1cm
\parskip 0.0cm

\numberwithin{equation}{section}

\def\p{\partial}
\def\o{\overline}
\def\b{\bar}
\def\mb{\mathbb}
\def\mc{\mathcal}
\def\n{\nabla}

\newtheorem{thm}{Theorem}[section]
\newtheorem{lemma}[thm]{Lemma}
\newtheorem{prop}[thm]{Proposition}
\newtheorem{cor}[thm]{Corollary}
\theoremstyle{definition}
\newtheorem{rem}[thm]{Remark}

\newtheorem{ex}[thm]{Example}
\theoremstyle{definition}
\newtheorem{defn}[thm]{Definition}
\newcommand{\comment}[1]{}
\usepackage{fancyhdr}
\pagestyle{fancy}
\fancyhead{}
\fancyhead[CO]{Holomorphic sectional curvature of complex Finsler manifolds}
\fancyhead[CE]{Xueyuan Wan}
\rhead[]{\thepage}
\lhead[\thepage]{}
\cfoot{}

\begin{document}

\title{Holomorphic sectional curvature of complex Finsler manifolds}

%\author[Huitao Feng]{Huitao Feng$^1$}
%\author[Kefeng Liu]{Kefeng Liu$^2$}
\author[Xueyuan Wan]{Xueyuan Wan}
%\address{Huitao Feng: Chern Institute of Mathematics \& LPMC,Nankai University, Tianjin, China}
%\email{fht@nankai.edu.cn}

%\address{Kefeng Liu, Department of Mathematics,Capital Normal University, Beijing, 100048, China;Department of Mathematics, University of California at Los Angeles, California 90095, USA}
%\email{liu@math.ucla.edu}

\address{Xueyuan Wan: Mathematical Sciences, Chalmers University of Technology, 41296 Gothenburg, Sweden}
\email{xwan@chalmers.se}

%\thanks{$^1$~Partially supported by NSFC (Grant No. 11221091, 11271062, 11571184)}
%\thanks{$^2$~Partially supported By NSF Grant.}
\thanks{Partially supported by NSFC (Grant No. 11221091,11571184)}

\begin{abstract}
In this paper, we get an inequality in terms of holomorphic sectional curvature of complex Finsler metrics. As applications, we prove a Schwarz Lemma from a complete Riemannian manifold to a complex Finsler manifold. We also show that a strongly pseudoconvex complex Finsler manifold with  semi-positive but not identically zero holomorphic sectional curvature has negative Kodaira dimension under an extra condition.
 \end{abstract}
\maketitle
\tableofcontents

%%%%%%%%%%%%%%%%%%%%%%%%%%%%%%%%%%%%%%%%%%%
%%%%%%%%%%%%%%%%%%%%%%%%%%%%%%%%%%%%%%%%%%%

\section*{Introduction} \label{s0}

In this paper, we study the holomorphic sectional curvature of complex Finsler manifolds (see Definition \ref{defn1}). For a general complex manifold, there is a natural and intrinsic Finsler pseudo-metric, i.e . Kobayashi metric $k_M$ (see Definition \ref{defn2}), which is the maximum pseudo-metric among the pseudo metrics satisfying the decreasing property. This metric defines a Kobayashi pseudo-distance, and a complex manifold is called Kobayashi hyperbolic if the Kobayashi pseudo-distance is a distance in common sense.  It is known that a complex Finsler manifold is Kobayashi hyperbolic if its holomorphic sectional curvature is bounded from above by a negative constant. As an application, the moduli space of canonically polarized complex manifolds is Kobayshi hyperbolic \cite{To, Sch1}.

In hyperbolic geometry a conjecture of Kobayashi asserts that the canonical bundle is ample if the manifold is hyperbolic \cite[p. 370]{Ko5}.
 Wu and Yau \cite{Wu} proved the ampleness of the canonical bundle for a projective manifold admitting a K\"ahler metric with negative holomorphic sectional curvature. Later on, V. Tosatti and X. Yang \cite{Tosatti} proved this result without projective condition and asked if it still true for a compact Hermitian manifold. In \cite{Yau4}, Yau conjectured that an algebraic manifold is of general type if and only if it admits a complex Finsler metric with strongly negative holomorphic sectional curvature which may be degenerate along a subvariety. For the case of quasi-negative holomorphic sectional curvature, the ampleness of the  canonical bundle also had been  proved in \cite{Div1, Wu1}. In the proofs of above results, they  essentially  used Yau's Schwarz Lemma \cite{Yau1}. This is also a motivation for us to study the Schwarz Lemma in the case of complex Finsler manifolds.

 \begin{prop}\label{0.1}
Let $M$ be a complex manifold of dimension $n$, $G_1$ and $G_2$ be two complex Finsler metrics on $M$. For any  point $(z,[v])\in P(TM)$, one has
\begin{align}
\frac{2}{G_2}\sup_{\pi_*(X)=v}\left\{\p\b{\p}\log\frac{G_1}{G_2}(X,\o{X})\right\}\geq K_{G_2}-\frac{G_1}{G_2}K_{G_1},
\end{align}
where $\pi_*: T(TM)\to TM$ is the differential of $\pi: TM\to M$.
\end{prop}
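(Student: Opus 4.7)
The plan is to read off from Definition \ref{defn1} the variational characterization
\[
K_G(z,[v]) \;=\; -\frac{2}{G(v)}\inf_{\pi_*(X)=v}\bigl\{\partial\bar\partial\log G(X,\overline{X})\bigr\},
\]
equivalently, every $X\in T^{1,0}_{(z,v)}TM$ with $\pi_*(X)=v$ satisfies the one-sided bound
\[
\partial\bar\partial\log G(X,\overline{X}) \;\geq\; -\tfrac12\,G(v)\,K_G(z,[v]),
\]
with equality attained at an extremal lift. Once this reformulation is in place, the rest of the argument is short algebra.

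I would begin by writing $\log(G_1/G_2)=\log G_1-\log G_2$ and using the additivity of $\partial\bar\partial$ to split
\[
\partial\bar\partial\log\frac{G_1}{G_2}(X,\overline{X}) \;=\; \partial\bar\partial\log G_1(X,\overline{X}) - \partial\bar\partial\log G_2(X,\overline{X}).
\]
The key idea is to test the supremum on the left with a vector tailored to $G_2$: choose $X_0$ to be an extremal lift of $v$ for $G_2$, so that $\partial\bar\partial\log G_2(X_0,\overline{X_0})=-\tfrac12 G_2 K_{G_2}$ exactly, while the one-sided inequality applied to $G_1$ at this \emph{same} $X_0$ only yields $\partial\bar\partial\log G_1(X_0,\overline{X_0})\geq -\tfrac12 G_1 K_{G_1}$ (no reason for equality, since $X_0$ need not be extremal for $G_1$). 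Subtracting gives
\[
\partial\bar\partial\log(G_1/G_2)(X_0,\overline{X_0}) \;\geq\; \tfrac12\bigl(G_2 K_{G_2}-G_1 K_{G_1}\bigr),
\]
and because the supremum in the statement dominates the value at any admissible test vector, multiplying through by $2/G_2$ produces the claimed inequality.

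The algebra is essentially a three-line subtraction; the conceptual work is entirely in the reformulation of $K_G$. The main obstacle, then, is to unpack Definition \ref{defn1} and verify that the infimum characterization holds with the precise normalization $2/G$ (rather than $2/G^2$) and over the same class of lifts $X$ used on the left-hand side, with no extra horizontality constraint. Under strong pseudoconvexity one expects the infimum to be attained at the horizontal lift of $v$ associated with the Chern--Finsler connection, where $\partial G(X_0)=0$ and $\partial\bar\partial\log G$ reduces to $\partial\bar\partial G/G$; this both secures the existence of $X_0$ and makes the extremal identity transparent. Even if Definition \ref{defn1} restricts $X$ to horizontal lifts only, no harm is done: the chosen $X_0$ for $G_2$ is still admissible in the unconstrained supremum on the left of the Proposition, so the same argument goes through verbatim.
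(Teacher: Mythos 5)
Your proposal is correct and is essentially the paper's own argument: both proofs come down to identifying $K_{\varphi^*G}(0)$ with $-\tfrac{2}{G(v)}\,\partial\bar\partial\log G(X,\overline{X})$ for a lift $X$ of $v$ realizable by a holomorphic disc (chain rule plus prescribing the $2$-jet of $\varphi$), followed by the same short subtraction coming from $\log(G_1/G_2)=\log G_1-\log G_2$; the paper merely performs the subtraction as $\sup-\sup\leq\sup(\text{difference})$ over discs before passing to lifts, whereas you pass to the lift characterization first and test at a $G_2$-extremal lift. The only detail to adjust is that strong pseudoconvexity is not assumed in the Proposition, so your extremal $X_0$ need not exist (and $K_{G_2}$ could be $+\infty$); replacing $X_0$ by a minimizing sequence for $\partial\bar\partial\log G_2(X,\overline{X})$ gives the stated inequality unchanged.
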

A simple maximum principle argument immediately gives a direct proof for the Schwarz Lemma of \cite[Theorem 4.1]{Shen}. For the case of a complete Riemann surface $(M,G_1)$, by using Proposition \ref{0.1},  we obtain the following generalization of Yau's Schwarz Lemma (\cite[Theorem 2']{Yau1}).
 \begin{thm}\label{thm1}
Let $(M,G_1)$ be a complete Riemann surface with curvature bounded from below by a constant $K_1$. Let $(N, G_2)$ be another Finsler manifold with holomorphic sectional curvature bounded from above by a negative constant $K_2$. Then for any holomorphic map $f$ from $M$ to $N$,
\begin{align}\label{1.4}
\frac{f^*G_2}{G_1}\leq \frac{K_1}{K_2}.
\end{align}
\end{thm}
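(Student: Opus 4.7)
Proof plan. Since $\dim_{\mb C}M=1$, every complex Finsler metric on $M$ is Hermitian; in a local chart on $TM$ we may write $G_1=g_1(z)|v|^2$. The ratio $u:=f^*G_2/G_1$ is therefore independent of $v$ and descends to a non-negative function on $M$, smooth away from the discrete critical set $Z$ of $f$. The plan is to apply Proposition \ref{0.1} on $M$ with $(f^*G_2,G_1)$ playing the roles of $(G_1,G_2)$, reduce it to a scalar Laplacian inequality for $\log u$, and then close the argument with a non-compact maximum principle.

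For the first step, note that $P(TM)=M$ when $\dim_{\mb C}M=1$, and that $\log u$ is pulled back from $M$, so vertical directions annihilate $\p\b\p\log u$ and the supremum over lifts $X$ with $\pi_*X=v$ in Proposition \ref{0.1} collapses to a single value. The inequality becomes
\begin{align*}
\frac{2}{G_1}\,\p\b\p\log u\,(v,\o v)\;\geq\;K_{G_1}(z,v)-u(z)\,K_{f^*G_2}(z,v).
\end{align*}
By naturality of holomorphic sectional curvature under a holomorphic pullback, $K_{f^*G_2}(z,v)=K_{G_2}(f(z),f_*v)$ on $M\setminus Z$; combining this with $K_{G_1}\geq K_1$, $K_{G_2}\leq K_2<0$, and $u\geq 0$ gives
\begin{align*}
\Delta_{G_1}\log u\;\geq\;K_1-u\,K_2\quad\text{on }M\setminus Z,
\end{align*}
where $\Delta_{G_1}$ is the Laplace--Beltrami operator of the Hermitian metric $g_1$.

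The second step is Yau's generalized maximum principle. Because $(M,g_1)$ is a complete Riemann surface with Gauss (equivalently Ricci) curvature bounded below by $K_1$, the Omori--Yau principle applies to $\log(u+\epsilon)$ for each $\epsilon>0$: one produces a sequence $z_k\in M$ with $u(z_k)\to\sup_M u$ and $\Delta\log(u+\epsilon)(z_k)\leq 1/k$. Substituting into the differential inequality and letting $k\to\infty$ and then $\epsilon\to 0$ yields $0\geq K_1-(\sup_M u)K_2$; since $K_2<0$ this rearranges to $\sup_M u\leq K_1/K_2$, which is \eqref{1.4}. The sub-case $\sup_M u=+\infty$ is excluded by the same estimate, because the right-hand side of the differential inequality would then be arbitrarily large along the sequence, contradicting the $1/k$ bound. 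As a fallback, one could instead run Yau's classical cutoff argument with the barrier $(1-r^2/R^2)^2$ on geodesic balls $B(p,R)$ together with the Laplace comparison theorem, then let $R\to\infty$.

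The principal obstacle is the non-compact maximum-principle step; the hypothesis for Omori--Yau is precisely met by the assumed completeness plus the lower curvature bound $K_{G_1}\geq K_1$. Secondary, essentially routine, issues are the pullback naturality of the Finsler holomorphic sectional curvature along the image of $f$, and the irrelevance of the discrete set $Z$, which does not affect the supremum.
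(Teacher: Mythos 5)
Your overall strategy coincides with the paper's: reduce matters to the scalar differential inequality $\Delta u\geq 2u(K_1-uK_2)$ via Proposition \ref{0.1} together with the decreasing property $K_{f^*G_2}\leq K_{G_2}\circ f_*$, and then invoke the Omori--Yau almost maximum principle on the complete surface $(M,G_1)$. The reduction step is essentially fine (your claimed \emph{equality} $K_{f^*G_2}=K_{G_2}\circ f_*$ is more than you need and more than follows from the definition as a supremum over discs --- only ``$\leq$'' is available, which is the direction required --- and the factor-of-$2$ normalization of the Laplacian is harmless).

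The gap is in the maximum-principle step. The Omori--Yau principle applies only to functions that are bounded above, and a priori $\sup_M u$ could be $+\infty$ on the non-compact $M$; finiteness of $\sup_M u$ is part of what the theorem asserts. Applying the principle to $\log(u+\epsilon)$ therefore presupposes exactly what has to be proved, and your attempt to ``exclude the sub-case $\sup_M u=+\infty$ by the same estimate'' is circular: if $u$ is unbounded, the sequence $z_k$ you are arguing with does not exist in the first place. The paper circumvents this by applying Omori--Yau to the \emph{bounded} function $-\varphi\circ u$ with $\varphi(t)=(1+t)^{-1/2}$, which takes values in $(-1,0)$ no matter how large $u$ is; unwinding $\Delta(\varphi\circ u)=\varphi''(u)\|\nabla u\|^2+\varphi'(u)\Delta u$ yields
$K_1-uK_2<\tfrac{1}{2u}\bigl(\epsilon\,|\varphi'(u)|^{-1}+\epsilon^2\varphi''(u)\,|\varphi'(u)|^{-3}\bigr)$
along the Omori--Yau sequence, whose left-hand side is of order $u(p_\nu)$ while the right-hand side is of order $\epsilon\,u(p_\nu)^{1/2}+\epsilon^2u(p_\nu)$; this first forces $\sup_M u<\infty$ and then, letting $\nu\to\infty$ and $\epsilon\to 0$, gives $\sup_M u\leq K_1/K_2$. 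Your proposed fallback (the cutoff $(1-r^2/R^2)^2u$ on geodesic balls plus Laplacian comparison) would also repair the gap, since that auxiliary function attains its maximum on $\overline{B(p,R)}$ regardless of whether $u$ is globally bounded, but as written it is only mentioned, not carried out.
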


If $(M,G_1)$ is  a unit disc with a Poincar\'e metric (which is complete) and by the definition of Kobayashi metric, one has
$4k^2_N\geq -K_2G_1$. This implies that $N$ is Kobayashi hyperbolic by definition.

For a compact K\"ahler manifold with positive holomorphic sectional curvature, Yau \cite[Problem 67]{Yau3} asked if the manifold has negative Kodaira dimension. X. Yang \cite{Xiao} gave a affirmative answer to this problem. More precisely, if $(M,\omega)$ is a Hermitian manifold with semi-positive but not identically zero holomorphic sectional curvature, then the Kodaria dimension $\kappa(M)=-\infty$.
Naturally, one may ask whether this result holds for a strongly pseudoconvex complex Finsler manifold, namely whether for a compact strongly pseudoconvex Finsler manifold $(M,G)$ with semi-positive but not identically zero holomorphic sectional curvature one has that $\kappa(M)=-\infty$.  

For any strongly pseudocovex complex Finsler metric $G$, there is a canonical (Finslerian) tensor
\begin{align}
\hat{G}:=\frac{\p^2 G}{\p v^i\p v^j}\delta v^i\otimes \delta v^j\in A^0((TM)^o, \mc{V}^*\otimes\mc{V}^*).
\end{align} 
Here $(TM)^o$ denotes the set of all non zero holomorphic tangent vectors of $M$, $\{\delta v^i\}_{i=1}^n$ is a local holomorphic frame of $\mc{V}^*$ (for the definitions of $\delta v^i$ and $\mc{V}^*$ see (\ref{H})).  By taking covariant derivative of the Finslerian tensor $\hat{G}$ along the anti-holomorphic direction $\o{\hat{P}}:=\o{v^i\frac{\delta}{\delta z^i}}$, we get 
\begin{align}
	\b{\p}\hat{G}(\o{\hat{P}})\in A^0((TM)^o, \mc{V}^*\otimes\mc{V}^*).
\end{align}

By using Berndtsson's curvature formula to the cotangent bundle, we prove that
\begin{thm}
	Let $(M,G)$ be a compact strongly pseudoconvex complex Finsler manifold with semi-positive but not identically zero holomorphic sectional curvature, and satisfy $\b{\p}\hat{G}(\o{\hat{P}})=0$, then $\kappa(M)=-\infty$.
\end{thm}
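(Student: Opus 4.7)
The strategy is to argue by contradiction. Assume there exist $m \geq 1$ and a nonzero section $\sigma \in H^0(M, K_M^{\otimes m})$, and construct a Hermitian metric $h$ on $K_M$ such that $\sqrt{-1}\Theta_h(K_M) \leq 0$ on all of $M$ and $< 0$ at some point. Once $h$ is in hand, the function $\log |\sigma|^2_{h^m}$ is plurisubharmonic on the complement of the zero locus of $\sigma$ (with the natural extension by $-\infty$ across the zero set), hence by the maximum principle on the compact manifold $M$ it must be constant. This forces the curvature of $h^m$ to vanish identically, contradicting strict negativity at one point. Consequently $H^0(M, K_M^{\otimes m}) = 0$ for every $m \geq 1$, i.e.\ $\kappa(M) = -\infty$.

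The candidate metric $h$ is manufactured from the Finslerian tensor $\hat G$. Since $\hat G$ is a Hermitian metric on the vertical bundle $\mathcal{V}^*$ over $(TM)^o$, its determinant induces a Hermitian metric on the line bundle $\det \mathcal{V}^*$. Under the natural identification of $\det \mathcal{V}^*$ with the pullback $\pi^* K_M$, and using the homogeneity of $G$ in the fiber variable (so that the construction descends from $(TM)^o$, or equivalently from the compact projectivization $P(TM)$, to $M$), $\det \hat G$ determines a well-defined Hermitian metric $h$ on $K_M$ over $M$.

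The heart of the argument is the curvature computation, for which I would invoke Berndtsson's formula for the curvature of an $L^2$-type metric on a determinant line bundle induced by a variation of Hermitian metrics on a vector bundle. In our situation, the horizontal variation of $\hat G$ on $\mathcal{V}^*$ is governed by the curvature tensor of $G$, and the appropriate $(1,1)$-trace along the direction $\hat P = v^i\,\delta/\delta z^i$ is a positive multiple of the holomorphic sectional curvature $K_G(z,[v])$. Berndtsson's formula contains an additional ``positivity correction'' term that is quadratic in the $\bar\partial$-derivative of the varying metric in the horizontal direction; the hypothesis $\bar{\partial}\hat G(\overline{\hat P}) = 0$ is precisely what is needed to kill this correction, so that the curvature of $h$ reduces to a negative multiple of $K_G$. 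Since $K_G \geq 0$ with strict inequality somewhere by hypothesis, this gives $\sqrt{-1}\Theta_h(K_M) \leq 0$ on $M$ and $< 0$ at some point, closing the argument.

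The main obstacle will be the curvature computation in the third step: controlling the horizontal Chern curvature of $(\mathcal{V}^*, \hat G)$ in the Finsler setting requires a careful separation of vertical from horizontal contributions, together with a verification that the Berndtsson correction term truly decouples from the horizontal holomorphic sectional curvature under the given vanishing hypothesis. A secondary subtlety is ensuring that the fiberwise non-compactness of $\pi:(TM)^o \to M$ does not obstruct passage from the metric on $\det \mathcal{V}^*$ to a bona fide Hermitian metric on $K_M$; the $2$-homogeneity of $G$ in the fiber direction should allow one to reduce everything to the compact sphere (or projective) bundle.
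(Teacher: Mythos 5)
Your overall strategy---apply Berndtsson's curvature formula, use the hypothesis $\bar\partial\hat G(\overline{\hat P})=0$ to kill the correction term, and deduce a curvature sign that obstructs pluricanonical sections---is the right one in spirit, but the construction of the metric $h$ in your second step does not work, and this is not a repairable detail. The tensor $\hat G=G_{ij}\,\delta v^i\otimes\delta v^j$ with $G_{ij}=\partial^2G/\partial v^i\partial v^j$ is a \emph{symmetric holomorphic-type} $2$-tensor, not a Hermitian metric on $\mathcal{V}^*$: it vanishes identically precisely when $G$ comes from a Hermitian metric (this is exactly how the paper characterizes Hermitian metrics among Finsler metrics), so ``$\det\hat G$'' does not define a metric on $\det\mathcal{V}^*$. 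You have conflated $\hat G$ with the vertical Hermitian form $G_{i\bar j}$. Even replacing $\hat G$ by $G_{i\bar j}$ does not save the construction: $\det\bigl(G_{i\bar j}(z,v)\bigr)$ genuinely depends on the fiber point $[v]\in P(T_zM)$ (homogeneity only gives invariance under $v\mapsto\lambda v$), so it does not descend to a Hermitian metric on $K_M$ over $M$. To produce something on $M$ one must integrate over the fiber, and that is what the paper does: it realizes $T^*M$ as the direct image $\pi_*(K_{P(TM)/M}+L)$ with $L=(n+1)\mathcal{O}_{P(TM)}(1)+\pi^*\det TM$, endows it with Berndtsson's $L^2$-metric, and evaluates the curvature on the canonical element $u^i\otimes\partial/\partial z^i$.

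This changes what you are entitled to conclude. The fiber integration only controls a \emph{trace} of the curvature: one obtains $s_h+\hat s_h\le\frac{n+1}{2}\int_{\mathcal{X}_z}K_G\,(\cdots)$, with equality exactly when $\bar\partial\hat G(\overline{\hat P})=0$; note the hypothesis is needed to turn the inequality into an equality so that $K_G\ge0$ yields $s_h+\hat s_h\ge0$ --- the Berndtsson correction term enters with the sign that would otherwise ruin the conclusion, so your phrasing that it merely ``kills a positivity correction'' understates its role. What one ends up with is a Hermitian metric on $M$ whose \emph{sum of the two scalar curvatures} is semi-positive and positive at a point. This is far weaker than a metric on $K_M$ with pointwise semi-negative, somewhere negative, curvature, which is what your maximum-principle argument on $\log|\sigma|^2_{h^m}$ requires; such pointwise control is not available from these hypotheses. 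The correct closing step is the integral/Bochner argument of Balas and X.~Yang: on a compact Hermitian manifold with $s_h+\hat s_h\ge0$ and $>0$ somewhere, $H^0(M,lK_M)=0$ for all $l\ge1$. Without that replacement your proof does not close.
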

\begin{rem}
From the definition of $\hat{P}$ and $\hat{G}$ (see (\ref{hatP}, \ref{hatG}) in Section \ref{section1}), if $G$ comes from a Hermitian metric, the Finslerian tensor $\hat{G}$ is identically trivial, so that any Hermitian metric satisfies the condition $\b{\p}\hat{G}(\o{\hat{P}})=0$. Moreover, there are also many non Hermitian strongly pseudoconvex complex Finsler metrics which satisfy the condition (see Example \ref{ex}).
\end{rem}

\

{\bf Acknowledgements.} The author would like to thank Professor Kefeng Liu and Professor Huitao Feng for their suggestions on the study of holomorphic sectional curvature of complex Finsler manifolds. And thank Professor Bo Berndtsson and Professor Xiaokui Yang for many helpful discussions. The author would like to thank the anonymous referee for valuable comments which helped to improve the paper.

 %%%%%%%%%%%%%%%%%%%%%%%%%%%%%%%%%%%%%%%%%%%

\section{Holomorphic sectional curvature of complex Finsler manifolds}\label{section1}

In this section, we shall fix notation and recall some basic definitions and facts on complex Finsler manifolds. For more details we refer to  \cite{AP, Aikou1, Aikou, Cao-Wong, FLW, Ko1, Wong1}.

Let $M$ be a complex manifold of dimension $n$, and let $\pi:TM\to M$ be the holomorphic tangent bundle of $M$.  Let $z=(z^1,\cdots, z^n)$ be a local coordinate system  in $M$, and let $\{\frac{\p}{\p z^i}\}_{1\leq i\leq n}$ denote the corresponding natural frame of $TM$. So any element in $TM$ can be written as
$$v=v^i\frac{\p}{\p z^i}\in TM,$$
where we adopt the summation convention of Einstein. In this way, one gets a local coordinate system on the complex manifold $TM$: 
\begin{align}\label{coor}
(z;v)=(z^1,\cdots,z^n; v^1,\cdots, v^n).
\end{align}

\begin{defn}[\cite{Ko1, Ko5}]
A Finsler metric  $G$
on the complex manifold $M$ is a continuous function $G:TM\to\mathbb{R}$ satisfying the following conditions:

\begin{description}
  \item [F1)] $G$ is smooth on $(TM)^o=TM\setminus O$, where $O$ denotes the zero section of $TM$;
  \item[F2)] $G(z,v)\geq 0$ for all $(z,v)\in TM$ with $z\in M$ and $v\in\pi^{-1}(z)$, and $G(z,v)=0$ if and only if $v=0$;
  \item[F3)] $G(z,\lambda v)=|\lambda|^2G(z,v)$ for all $\lambda\in\mathbb{C}$.
\end{description}

Moreover, $G$ is called strongly pseudoconvex if

\begin{description}
  \item[F4)] the Levi form ${\sqrt{-1}}\partial\bar\partial G$ on $(TM)^o$ is positive-definite along fibres $(TM)_z=\pi^{-1}(z)$ for $z\in M$.
\end{description}
$(M,G)$ is called a (strongly pseudoconvex) complex Finsler manifold if $G$ is a (strongly pseudoconvex) complex Finsler metric.
\end{defn}
Clearly, any Hermitian metric on $M$ is naturally a strongly pseudoconvex complex Finsler metric on it.

We write
\begin{align}
&G_i=\frac{\p G}{\p v^i},\quad G_{\b{j}}=\frac{\p G}{\p\b{v}^j},\quad G_{i\b{j}}=\frac{\p^2 G}{\p v^i\p\b{v}^j},\\
&G_{i;j}=\frac{\p^2 G}{\p v^i\p z^{j}},\quad G_{i\b{j};\b{l}}=\frac{\p^3 G}{\p v^i\p\b{v}^j\p\b{z}^\beta},\quad  \text{etc}.,
\end{align}
to denote the differentiation with respect to $v^i,\b{v}^j, z^i,\b{z}^j$, and we denote $(G^{\b{j}i})$ the inverse matrix of $(G_{i\b{j}})$. In the following lemma we collect some useful identities related to a Finsler metric $G$.
\begin{lemma}[\cite{Cao-Wong, Ko1}]\label{1.111} The following identities hold for any $(z,v)\in E^o$, $\lambda\in \mathbb{C}$:
\begin{align}
G_i(z,\lambda v)=\bar\lambda G_i(z,v),\quad G_{i\bar j}(z,\lambda v)=G_{i\bar j}(z,v)=\bar G_{j\bar i}(z,v);
\end{align}
\begin{align}
G_i(z,v)v^i=G_{\bar j}(z,v)\bar v^j=G_{i\bar j}(z,v)v^i\bar v^j=G(z,v);
\end{align}
\begin{align}
G_{ij}(z,v)v^i=G_{i\bar j k}(z,v)v^i=G_{i\bar j\bar k}(z,v)\bar v^j=0.
\end{align}
\end{lemma}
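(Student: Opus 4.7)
The plan is to derive every identity from the single homogeneity axiom (F3), namely $G(z,\lambda v)=|\lambda|^2 G(z,v)=\lambda\bar\lambda\, G(z,v)$, together with the fact that $G$ is real-valued. Throughout, I would treat $\lambda$ and $\bar\lambda$ as independent complex parameters (equivalently, as two independent real parameters in the polarized sense), and exploit that on the fibres of $TM$ the coordinates $v^i$ and $\bar v^j$ are independent.

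For the first line (1.8), I would differentiate (F3) directly in the fibre variables. Applying $\partial/\partial v^i$ to $G(z,\lambda v)=\lambda\bar\lambda\, G(z,v)$ and using the chain rule on the left produces $\lambda\,G_i(z,\lambda v)=\lambda\bar\lambda\, G_i(z,v)$, which gives the first claim $G_i(z,\lambda v)=\bar\lambda\, G_i(z,v)$. Differentiating this relation once more in $\bar v^j$ brings down another factor of $\bar\lambda$ on the left, and the two $\bar\lambda$'s cancel to yield $G_{i\bar j}(z,\lambda v)=G_{i\bar j}(z,v)$. The reality identity $G_{i\bar j}=\overline{G_{j\bar i}}$ then follows simply from $\overline{G}=G$: complex conjugation swaps holomorphic and antiholomorphic differentiation operators, so $\overline{G_{i\bar j}}=\overline{\partial^2 G/\partial v^i\partial\bar v^j}=\partial^2 G/\partial\bar v^i\partial v^j=G_{j\bar i}$.

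For the Euler-type identities in (1.9), I would differentiate (F3) in $\lambda$ (respectively $\bar\lambda$) and specialize to $\lambda=1$. On the left the chain rule gives $G_i(z,\lambda v)\,v^i$, and on the right one gets $\bar\lambda\, G(z,v)$; at $\lambda=1$ this reads $G_i(z,v)v^i=G(z,v)$. The conjugate differentiation yields $G_{\bar j}\bar v^j=G$. Then differentiating $G_i v^i=G$ in $\bar v^j$ (noting $v^i$ is independent of $\bar v^j$) produces $G_{i\bar j}v^i=G_{\bar j}$, and contracting with $\bar v^j$ gives $G_{i\bar j}v^i\bar v^j=G$.

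For (1.10), the plan is to differentiate the Euler identities of (1.9) once more in the fibre variables. Applying $\partial/\partial v^j$ to $G_iv^i=G$ gives $G_{ij}v^i+G_j=G_j$, whence $G_{ij}v^i=0$. Applying $\partial/\partial v^k$ to $G_{i\bar j}v^i=G_{\bar j}$ gives $G_{i\bar j k}v^i+G_{k\bar j}=G_{\bar j k}$, and since mixed partials commute ($G_{k\bar j}=G_{\bar j k}$) one obtains $G_{i\bar j k}v^i=0$. Finally, differentiating the conjugate Euler identity $G_{\bar j}\bar v^j=G$ in $\bar v^k$ yields $G_{\bar j\bar k}\bar v^j=0$, and one further $\partial/\partial v^i$ produces $G_{i\bar j\bar k}\bar v^j=0$. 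There is no genuine analytic obstacle here; the work is purely bookkeeping, and the only care required is to track the factors of $\lambda$ or $\bar\lambda$ appearing from each chain-rule step and to respect the distinction between holomorphic and antiholomorphic variables on the fibres.
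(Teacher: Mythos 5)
Your proof is correct and is exactly the standard derivation: all three groups of identities follow by differentiating the homogeneity relation $G(z,\lambda v)=\lambda\bar\lambda\,G(z,v)$ in the fibre variables and in $\lambda,\bar\lambda$, then specializing to $\lambda=1$. The paper does not prove this lemma itself (it cites \cite{Cao-Wong, Ko1}), and your argument matches the argument those references use, so there is nothing further to reconcile.
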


Let $\Delta=\{w\in\mb{C}||w|<1\}$ be a unit disc. For any holomorphic map $\varphi:\Delta\to M$, one can define a conformal metric on the disc $\Delta$ by
\begin{align}
ds^2=\varphi^*G dw\otimes d\b{w},	
\end{align}
where $(\varphi^*G)(u\frac{\p}{\p w}):=G(\varphi_*(u\frac{\p}{\p w}))$.
The Gaussian curvature $K_{\varphi^*G}$ of $\varphi^*G$ is given by
\begin{align}
K_{\varphi^*G}=-\frac{2}{\varphi^*G}\frac{\p^2\log \varphi^*G}{\p w\p\b{w}}.	
\end{align}
Then one can define the holomorphic sectional curvature of $(M,G)$ as follows.
\begin{defn}[\cite{AP1, Aikou1}]\label{defn1}
The holomorphic sectional curvature $K_G(z,[v])$ of $G$ at $(z,[v])\in P(TM):=(TM)^o/\mb{C}^*$ is defined by
\begin{align}
K_G(z,[v]):=\sup_{\varphi}\{K_{\varphi^*G}(0)\},	
\end{align}
	where the supremum is taken over all the holomorphic maps $\varphi:\Delta\to M$ satisfying $\varphi(0)=z$, $\varphi'(0)=\lambda v$ for some $\lambda\in \mb{C}^*$.
\end{defn}

If $G$ is a strongly pseudoconvex complex Finsler metric on $M$, then there is a canonical h-v decomposition of the holomorphic tangent bundle $T(TM)^o$ of $(TM)^o$ (see \cite[\S 5]{Cao-Wong} or \cite[\S 1]{FLW}).
\begin{align}
T(TM)^o=\mc{H}\oplus \mc{V}.
\end{align}
In terms of local coordinates,
\begin{align}
\mc{H}=\text{span}_{\mb{C}}\left\{\frac{\delta}{\delta z^i}=\frac{\p}{\p z^i}-G_{\b{j};i}G^{\b{j}k}\frac{\p}{\p v^k}, 1\leq i\leq n\right\},\quad \mc{V}=\text{span}_{\mb{C}}\left\{\frac{\p}{\p v^i}, 1\leq i\leq n\right\}.
\end{align}
Moreover, the dual bundle $T^*(TM)^o$ also has a smooth h-v decomposition $T^*(TM)^o=\mc{H}^*\oplus\mc{V}^*$ with
\begin{align}\label{H}
\mc{H}^*=\text{span}_{\mb{C}}\{dz^{i}, 1\leq i\leq n\},\quad \mc{V}^*=\text{span}_{\mb{C}}\{\delta v^i=dv^i+G^{\b{j}i}G_{\b{j};k}dz^k,\quad 1\leq i\leq n\}.
\end{align}
With respect to the h-v decomposition (\ref{H}), the $(1,1)$-form $\frac{\sqrt{-1}}{2\pi}\p\b{\p}\log G$ has the following decomposition.
\begin{lemma}[\cite{Ko1, Aikou}]\label{lemma1}
Let $G$ be a strongly pseudoconvex complex Finsler metric on $M$. One has
\begin{align}
\frac{\sqrt{-1}}{2\pi}\p\b{\p}\log G=-\frac{\Psi}{2\pi}+\omega_{FS},
\end{align}
where we denote
\begin{align}
\Psi=\sqrt{-1}R_{i\b{j}k\b{l}}\frac{v^i\b{v}^j}{G}dz^k\wedge d\b{z}^l,\quad \omega_{FS}=\frac{\sqrt{-1}}{2\pi}\frac{\p^2 \log G}{\p v^i\p\b{v}^j}\delta v^i\wedge \delta\b{v}^j,
\end{align}
with
$$R_{i\b{j}k\b{l}}=-\frac{\p^2 G_{i\b{j}}}{\p z^k\p\b{z}^l}+G^{\b{q}p}\frac{\p G_{i\b{q}}}{\p z^k}\frac{\p G_{p\b{j}}}{\p\b{z}^l}.$$
\end{lemma}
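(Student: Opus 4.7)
The plan is to compute $\p\b{\p}\log G$ directly in the h-v adapted coframe $\{dz^k,\delta v^i\}\cup\{d\b{z}^l,\delta\b{v}^j\}$ on $T^*(TM)^o$, and to extract the horizontal $dz\w d\b{z}$-piece and the vertical $\delta v\w\delta\b{v}$-piece separately. The whole calculation is driven by one clean initial observation: $\p\log G$ has no horizontal component, namely
\begin{align*}
\p\log G = \frac{G_i}{G}\,\delta v^i.
\end{align*}
To see this I would expand $\p\log G = \frac{G_{;k}}{G}dz^k + \frac{G_i}{G}dv^i$, substitute $dv^i = \delta v^i - G^{\b{j}i}G_{\b{j};k}dz^k$, and check that the $dz^k$-coefficient cancels using $G_iG^{\b{j}i}=\b{v}^j$ (which follows from $G_{i\b{j}}v^i = G_{\b{j}}$ in Lemma \ref{1.111}) together with the tautological identity $\b{v}^jG_{\b{j};k} = G_{;k}$, obtained by differentiating $G=G_{\b{j}}\b{v}^j$ in $z^k$. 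Conjugation yields $\b{\p}\log G = \frac{G_{\b{j}}}{G}\delta\b{v}^j$.

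Next I would write $\p\b{\p}\log G = -\b{\p}\p\log G = -\b{\p}(\tfrac{G_i}{G})\w\delta v^i - \frac{G_i}{G}\b{\p}\delta v^i$. The first piece produces the vertical contribution: repeating the reduction above in anti-holomorphic variables gives $\b{\p}(G_i/G) = \frac{\p^2\log G}{\p v^i\p\b{v}^j}\delta\b{v}^j$, so that this term contributes exactly $\frac{\p^2\log G}{\p v^i\p\b{v}^j}\delta v^i\w\delta\b{v}^j$, which after multiplication by $\frac{\sqrt{-1}}{2\pi}$ is the advertised $\omega_{FS}$.

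For the horizontal contribution I would expand $\b{\p}\delta v^i = \b{\p}(G^{\b{j}i}G_{\b{j};k})\w dz^k$ and split it using $d\b{v}^m = \delta\b{v}^m - G^{p\b{m}}G_{p;\b{l}}d\b{z}^l$ into a $\delta\b{v}^m\w dz^k$ part and a $d\b{z}^l\w dz^k$ part. I expect the $\delta\b{v}$-cross part to die after contraction with $G_i$: the relevant contraction $G_i\frac{\p(G^{\b{j}i}G_{\b{j};k})}{\p\b{v}^m}$ splits into two subterms that vanish separately by Lemma \ref{1.111}, one using $G_{s\b{p}\b{m}}\b{v}^p=0$ and the other by $\b{v}^m$-differentiating $\b{v}^jG_{\b{j};k}=G_{;k}$. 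The surviving $d\b{z}^l\w dz^k$ coefficient, simplified via $\frac{\p G^{\b{j}i}}{\p\b{z}^l}=-G^{\b{q}i}G^{\b{j}p}G_{p\b{q};\b{l}}$ (from differentiating $G^{\b{j}i}G_{s\b{j}}=\delta^i_s$), $G_iG^{\b{j}i}=\b{v}^j$, and $\b{v}^pG_{s\b{p};\b{l}}=G_{s;\b{l}}$, collapses to $-G_{;k\b{l}}+G^{\b{q}p}G_{\b{q};k}G_{p;\b{l}}$ times $dz^k\w d\b{z}^l$. Because $G_{i\b{j}}v^i\b{v}^j=G$ forces $G_{;k\b{l}}=\frac{\p^2 G_{i\b{j}}}{\p z^k\p\b{z}^l}v^i\b{v}^j$ and $\frac{\p G_{i\b{q}}}{\p z^k}v^i=G_{\b{q};k}$, this bracket is precisely $R_{i\b{j}k\b{l}}v^i\b{v}^j$, producing the $-\Psi/2\pi$ piece after dividing by $G$ and multiplying by $\frac{\sqrt{-1}}{2\pi}$.

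The main obstacle is purely bookkeeping: juggling derivatives of the inverse matrix $G^{\b{j}i}$ and of $G_{\b{j};k}$ through the change of coframe, and noticing that every potentially dangerous cross-term is killed by an appropriate homogeneity identity from Lemma \ref{1.111}. There is no analytic or geometric difficulty beyond this; once the reduction $\p\log G = \frac{G_i}{G}\delta v^i$ is in hand, the remainder of the identification is forced by the Koszul-like formula for $R_{i\b{j}k\b{l}}$.
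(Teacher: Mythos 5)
Your proposal is correct: the reduction $\p\log G=\frac{G_i}{G}\delta v^i$ holds by exactly the cancellation you describe, the cross-term $G_i\,\p_{\b{v}^m}\!\left(G^{\b{j}i}G_{\b{j};k}\right)$ does vanish via $G_{i\b{j}\b{k}}\b{v}^j=0$ and the $\b{v}^m$-derivative of $\b{v}^jG_{\b{j};k}=G_{;k}$, and the surviving horizontal coefficient is $R_{i\b{j}k\b{l}}v^i\b{v}^j$ (the sign works out once the overall minus in $-\frac{G_i}{G}\b{\p}\delta v^i$ from your master formula is carried along). The paper gives no proof of this lemma, citing Kobayashi and Aikou instead, and your direct computation in the adapted coframe is precisely the standard argument from those references, so it correctly supplies the omitted details.
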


For the holomorphic vector bundle $T(TM)^o\to (TM)^o$, there are two special smooth vector fields
\begin{align}\label{hatP}
\hat{P}=v^i\frac{\delta}{\delta z^i}\in \mc{H},\quad P=v^i\frac{\p}{\p v^i}\in \mc{V},
\end{align}
which are well-defined. Indeed, for two local coordinate neighborhoods $(U_{\alpha}, \{z_{\alpha}^i\}), (U_{\beta}, \{z^j_{\beta}\})$ of $M$ with $U_{\alpha}\cap U_{\beta}\neq \emptyset$, one has
\begin{align}\label{1.1}
v^i_{\alpha}=\frac{\p z^i_{\alpha}}{\p z^j_{\beta}}v^j_{\beta}, \quad \frac{\delta}{\delta z^i_{\alpha}}=\frac{\p z^j_{\beta}}{\p z^i_{\alpha}}\frac{\delta}{\delta z^j_{\beta}}.
\end{align}

For a strongly pseudoconvex complex Finsler metric $G$, one can also define the holomorphic sectional curvature by
\begin{align}\label{defn22}
K_G=2R_{i\b{j}k\b{l}}\frac{v^i\b{v}^jv^k\b{v}^l}{G^2}=-\frac{2}{G}(\p\b{\p}\log G)(\hat{P},\o{\hat{P}}).
\end{align}
(see \cite[Formula (6.7)]{Ko1}), which is a  function on projective bundle $P(TM)$. In this case, the two kinds of definitions for holomorphic sectional curvature coincide. More precisely, one has
\begin{prop}[{\cite[Corollary 2.5.4]{AP}, \cite[Proposition 7.2]{Aikou2}}]
If $G$ is a strongly pseudoconvex complex Finsler metric on $M$, then
\begin{align}
\sup_{\varphi}\{K_{\varphi^*G}(0)\}=2R_{i\b{j}k\b{l}}\frac{v^i\b{v}^jv^k\b{v}^l}{G^2},
\end{align}
where the supremum is taken over all the holomorphic maps $\varphi:\Delta\to M$ satisfying $\varphi(0)=z$, $\varphi'(0)=\lambda v$ for some $\lambda\in \mb{C}^*$.
\end{prop}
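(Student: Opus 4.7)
The plan is to pull back $\partial\bar\partial\log G$ along the tangent lift $\tilde\varphi=(\varphi,\varphi'):\Delta\to(TM)^o$ and then split the result using the horizontal-vertical decomposition of Lemma \ref{lemma1}. By the $\mathbb{C}^*$-homogeneity identities of Lemma \ref{1.111}, the right-hand side $2R_{i\bar jk\bar l}v^i\bar v^jv^k\bar v^l/G^2$ descends to a function on $P(TM)$, so one may rescale the parameter $\lambda$ and reduce to the case $\varphi'(0)=v$. In a holomorphic chart near $z$ I would parametrise candidate competitors as $\varphi(w)=z+wv+\tfrac12 w^2c+O(w^3)$ with $c\in\mathbb{C}^n$ a free vector, note that $\tilde\varphi$ lands in $(TM)^o$ for $|w|$ small, and use
\[
K_{\varphi^*G}(0)=-\frac{2}{G(z,v)}\,(\partial\bar\partial\log G)_{(z,v)}\bigl(\tilde\varphi_*\partial_w,\,\overline{\tilde\varphi_*\partial_w}\bigr).
\]

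Next I would decompose $\tilde\varphi_*\partial_w|_{w=0}=v^i\partial_{z^i}+c^i\partial_{v^i}$ using (\ref{H}) and the identity $\partial_{z^i}=\frac{\delta}{\delta z^i}+G_{\bar j;i}G^{\bar jk}\partial_{v^k}$. The horizontal component turns out to be exactly $\hat P=v^i\frac{\delta}{\delta z^i}$, independent of $c$, while the vertical component is $A^k\partial_{v^k}$ with $A^k=c^k+v^iG_{\bar j;i}G^{\bar jk}$. Since $dz^k$ kills vertical vectors and $\delta v^i$ kills horizontal ones, plugging this into Lemma \ref{lemma1} yields
\[
(\partial\bar\partial\log G)\bigl(\tilde\varphi_*\partial_w,\overline{\tilde\varphi_*\partial_w}\bigr)=-R_{i\bar jk\bar l}\frac{v^i\bar v^jv^k\bar v^l}{G}+\frac{\partial^2\log G}{\partial v^i\partial\bar v^j}A^i\bar A^j.
\]
The first term, once multiplied by $-2/G(z,v)$, reproduces exactly the right-hand side of the proposition and is insensitive to $c$, so the problem reduces entirely to controlling the second term.

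The delicate step, which I expect to be the main obstacle, is to prove that the quadratic form $\frac{\partial^2\log G}{\partial v^i\partial\bar v^j}=\frac{G_{i\bar j}}{G}-\frac{G_iG_{\bar j}}{G^2}$ is positive semi-definite on the fibre, with kernel exactly the radial direction spanned by $v$. The identities $G_iv^i=G=G_{i\bar j}v^i\bar v^j$ from Lemma \ref{1.111} immediately show that $v$ is null, while the strong pseudoconvexity condition F4 combined with a Cauchy--Schwarz argument yields strict positivity on any complement (this is the Fubini--Study nature of $\omega_{FS}$). Granting this, $\frac{\partial^2\log G}{\partial v^i\partial\bar v^j}A^i\bar A^j\geq 0$ with equality iff $A$ is proportional to $v$, so the previous display gives $K_{\varphi^*G}(0)\leq 2R_{i\bar j k\bar l}v^i\bar v^jv^k\bar v^l/G^2$. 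Equality is realised by the explicit choice $c^k=-v^iG_{\bar j;i}G^{\bar jk}$, which forces $A=0$; reparametrising $w\mapsto\epsilon w$ for small $\epsilon>0$ produces a genuine $\varphi:\Delta\to M$ with $\varphi'(0)\in\mathbb{C}^*\cdot v$, and since $\log|\epsilon|^2$ is pluriharmonic the Gaussian curvature at the origin is unaffected, so the supremum is attained.
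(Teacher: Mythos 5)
Your argument is correct and follows essentially the same route as the paper: lift $\varphi$ to $T(TM)$, split $\tilde\varphi_*\partial_w$ into $\hat{P}$ plus a vertical part via the h-v decomposition, apply Lemma \ref{lemma1}, and eliminate the vertical contribution by choosing $c^k=-v^iG_{\bar j;i}G^{\bar jk}$. The only (harmless) difference is that you spell out the Cauchy--Schwarz argument behind the fibrewise semi-positivity of $\partial\bar\partial\log G$ and the jet-realization/rescaling step, which the paper simply quotes as positivity of $\omega_{FS}$ along the fibres of $P(TM)$.
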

\begin{proof} For reader's convenient, we give a direct proof here.
For any holomorphic $\varphi:\Delta\to M$ with $\varphi(0)=z$, $\varphi'(0)=\lambda v$ for some $\lambda\in \mb{C}^*$, it induces a holomorphic map
\begin{align}
\varphi_*: T\Delta\to TM\quad \varphi_*(w,u)=\varphi_*(u\frac{\p}{\p w})=u\frac{\p\varphi^i}{\p w}\frac{\p}{\p z^i}=(\varphi(z); u\frac{\p\varphi^i}{\p w}),
\end{align}
which satisfies
\begin{align}
\varphi_*(0,1)=(z,\lambda v).
\end{align}
Similarly, the holomorphic map $\varphi_*$ also induces a holomorphic map $(\varphi_*)_*: T(T\Delta)\to T(TM)$ and
\begin{align}
(\varphi_*)_*(\frac{\p}{\p w}|_{(0,1)})=\lambda v^i\frac{\p}{\p z^i}|_{(z,\lambda v)}+\frac{\p^2\varphi^i}{\p w^2}\frac{\p}{\p v^i}|_{(z,\lambda v)}.
\end{align}
Therefore, one has
\begin{align}
\begin{split}
&\quad \sup_{\varphi}\{K_{\varphi^*G}(0)\}=\sup_{\varphi}\left\{-\frac{2}{G(z,\lambda v)}\left(\frac{\p^2}{\p w\p\b{w}}\log {\varphi}^*G\right)(0)\right\}\\
&=\sup_{\varphi}\left\{-\frac{2}{G(z,\lambda v)}(\varphi_*)^*(\p\b{\p}\log G)\left(\frac{\p}{\p w}|_{(0,1)},\frac{\p}{\p \b{w}}|_{(0,1)}\right)\right\}\\
&=\sup_{\varphi}\left\{-\frac{2}{G(z,\lambda v)}(\p\b{\p}\log G)\left((\varphi_*)_*(\frac{\p}{\p w}|_{(0,1)}),(\varphi_*)_*(\frac{\p}{\p \b{w}}|_{(0,1)})\right)\right\}\\
&=\sup_{\varphi}\left\{-\frac{2}{G(z,\lambda v)}(\p\b{\p}\log G)\left(\lambda v^i\frac{\delta}{\delta v^i}+(\frac{\p^2\varphi^k}{\p w^2}+G_{\b{l};i}G^{\b{l}k}\lambda v^i)\frac{\p}{\p v^k},\o{\lambda v^i\frac{\delta}{\delta v^i}+(\frac{\p^2\varphi^k}{\p w^2}+G_{\b{l};i}G^{\b{l}k}\lambda v^i)\frac{\p}{\p v^k}}\right)\right\},
\end{split}
\end{align}
By Lemma \ref{lemma1} and the property that $\omega_{FS}$ is positive along the fiber of $P(TM)$, taking the holomorphic map $\varphi:\Delta\to M$ with
\begin{align}
\varphi(0)=0, \quad \varphi'(0)=\lambda v, \quad \frac{\p^2\varphi^k}{\p w^2}(0)=-\lambda G_{\b{l};i}G^{\b{l}k} v^i,
\end{align}
one gets that
\begin{align}
	 \sup_{\varphi}\{K_{\varphi^*G}(0)\}=-\frac{2}{G(z,v)}\p\b{\p}\log G(v^i\frac{\delta}{\delta v^i},\o{v^i\frac{\delta}{\delta v^i}})=2R_{i\b{j}k\b{l}}\frac{v^i\b{v}^jv^k\b{v}^l}{G^2}.
\end{align}

\end{proof}

For a given strongly pseudocovex complex Finsler metric $G$, there is a canonical (Finslerian) tensor $\hat{G}$,
\begin{align}\label{hatG}
\hat{G}:=G_{ij}\delta v^i\otimes \delta v^j\in A^0((TM)^o, \mc{V}^*\otimes\mc{V}^*).
\end{align}
From (\ref{1.1}), it is easy to see that $\hat{G}$ is well-defined. Moreover, $\hat{G}=0$ if and only if the strongly pseudoconvex complex Finsler metric $G$ comes from a Hermitian metric. In fact, if $G$ comes from  Hermitian metric $h$ on $M$, i.e. $G=h_{i\b{j}}(z)v^i\b{v}^j$, then
$$G_{ij}=\frac{\p^2 G}{\p v^i\p v^j}=0.$$
So $\hat{G}=0$. Conversely, if $\hat{G}=0$, then
$$G_{i\b{k}j}=\frac{\p^2 G_{ij}}{\p\b{v}^k}=0.$$
By taking conjugation, one gets $G_{i\b{k}\b{j}}=0$. So $G_{i\b{j}}(z,v)$ is independent of the fiber, i.e. $G_{i\bar{j}}(z,v)=G_{i\b{j}}(z)$. Therefore, $G=G_{i\b{j}}(z)v^i\b{v}^j$ comes from a Hermitian metric.

Note that $\mc{V}^*\otimes\mc{V}^*\to (TM)^o$ is a holomorphic vector bundle with a local holomorphic frame $\{\delta v^i\otimes \delta v^j\}_{1\leq i,j\leq n}$. So
\begin{align}
\b{\p}\hat{G}\in A^{0,1}((TM)^o,\mc{V}^*\otimes \mc{V}^*).
\end{align}
%\begin{defn}\label{defn3}
%A strongly pseudoconvex complex Finsler metric $G$ on a complex manifold $M$ is called a generalized Hermitian metric if
%\begin{align}\label{1.2}
%\b{\p}\hat{G}(\o{\hat{P}})=0.
%\end{align}
%And we call $(M,G)$ a generalized Hermitian manifold.
%\end{defn}

In applications, one may consider a class of strongly pseudoconvex complex Finsler metrics $G$, which satisfies
\begin{align}\label{1.2}
\b{\p}\hat{G}(\o{\hat{P}})=0.
\end{align}

In terms of local coordinates, the equation (\ref{1.2}) is equivalent to
\begin{align}
\b{v}^l\frac{\delta}{\delta\b{z}^l}G_{ij}=0.
\end{align}

\begin{ex}\label{ex}
There are many non Hermitian strongly pseudoconvex complex Finsler metrics which satisfying condition (\ref{1.2}). For example, any 
complex Finsler manifold $(M,G)$ \textsl{modeled on a complex Minkowski space} satisfies the condition (\ref{1.2}). In fact, in this case, one has $\frac{\p}{\p\b{v}^j}(G_{\b{l};i}G^{\b{l}k})=0$ (see \cite[Definition 2.7, Proposition 2.10]{Aikou1} or \cite{Ich}). Then
\begin{align}
\begin{split}
\b{v}^l\frac{\delta}{\delta\b{z}^l}G_{ij}&=\b{v}^l(G_{ij;\b{l}}-G_{k;\b{l}}G^{k\b{t}}G_{ij\b{t}})\\
&=\b{v}^l\frac{\p}{\p v^j}(G_{k;\b{l}}G^{\b{t}k})G_{i\b{t}}\\
&=\b{v}^l\o{\frac{\p}{\p \b{v}^j}(G_{\b{k};l}G^{\b{k}t})}G_{i\b{t}}=0.
\end{split}
\end{align}
In particular, any flat or projectively flat Finsler metric $G$ verifies that $\frac{\p}{\p\b{v}^j}(G_{\b{l};i}G^{\b{l}k})=0$ (see \cite[Corollary 2.2, Proposition 2.18]{Aikou1}),
and so one has $\b{\p}\hat{G}(\o{\hat{P}})=0.$ 
\end{ex}

\section{Schwarz lemma for complex Finsler manifolds}
In this section, we get an inequality for complex Finsler manifolds which only contains holomorphic sectional curvature terms. By using this inequality and  the Almost Maximum Principle of Omori and Yau for complete manifolds, we obtain a generalization of the Yau's Schwarz lemma.

\begin{prop}\label{Prop1}
Let $M$ be a complex manifold of dimension $n$. Let $G_1$ and $G_2$ be two complex Finsler metrics on $M$. For any  point $(z,[v])\in P(TM)$, then
\begin{align}\label{2.1}
\frac{2}{G_2}\sup_{\pi_*(X)=v}\left\{\p\b{\p}\log\frac{G_1}{G_2}(X,\o{X})\right\}\geq K_{G_2}-\frac{G_1}{G_2}K_{G_1},
\end{align}
where $\pi_*: T(TM)\to TM$ is the differential of $\pi: TM\to M$.
\end{prop}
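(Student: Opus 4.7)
The plan is to reduce the claimed inequality to a pointwise identity on $TM$ that compares the Gaussian curvatures of $\varphi^*G_1$ and $\varphi^*G_2$ for a single holomorphic disc $\varphi:\Delta\to M$, and then to pass to the supremum on both sides. Both sides of (\ref{2.1}) are invariant under the fibrewise rescaling $v\mapsto\lambda v$ (each $G_i$ is $2$-homogeneous in $v$, so $\log(G_1/G_2)$ and hence $\p\b{\p}\log(G_1/G_2)$ is $\mb{C}^*$-invariant under this rescaling), so I fix a representative $v$ of $[v]$ and restrict attention to discs $\varphi:\Delta\to M$ with $\varphi(0)=z$ and $\varphi'(0)=v$, which is enough to compute $K_{G_i}(z,[v])$.

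The first step is the pointwise identity. For such a $\varphi$, introduce the lift $\tilde\varphi:\Delta\to TM$, $w\mapsto(\varphi(w),\varphi'(w))$, so that $\varphi^*G_i(w)=G_i(\tilde\varphi(w))$ and in particular $\varphi^*G_i(0)=G_i(z,v)$. Plugging this into $K_{\varphi^*G_i}(0)=-\frac{2}{\varphi^*G_i}\frac{\p^2\log\varphi^*G_i}{\p w\p\b{w}}(0)$ and comparing the two curvatures, a direct calculation gives
\begin{align*}
K_{\varphi^*G_2}(0)-\frac{G_1(z,v)}{G_2(z,v)}K_{\varphi^*G_1}(0)=\frac{2}{G_2(z,v)}\,\p\b{\p}\log\frac{G_1}{G_2}(X_\varphi,\o{X_\varphi}),
\end{align*}
where $X_\varphi:=\tilde\varphi_*\!\left(\frac{\p}{\p w}\right)\!\big|_{w=0}\in T_{(z,v)}(TM)$. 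In local coordinates $X_\varphi=v^i\frac{\p}{\p z^i}+\varphi''(0)^i\frac{\p}{\p v^i}$, so $\pi_*(X_\varphi)=v$; conversely, by choosing $\varphi$ quadratic with prescribed $\varphi''(0)$, every $X\in T_{(z,v)}(TM)$ with $\pi_*(X)=v$ is realised as $X_\varphi$ for some admissible $\varphi$.

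To finish, use the tautological bound $K_{\varphi^*G_1}(0)\leq K_{G_1}(z,[v])$. Since $G_1/G_2\geq 0$, the identity above implies, for every admissible $\varphi$,
\begin{align*}
\frac{2}{G_2}\sup_{\pi_*X=v}\p\b{\p}\log\frac{G_1}{G_2}(X,\o{X})\ \geq\ \frac{2}{G_2}\p\b{\p}\log\frac{G_1}{G_2}(X_\varphi,\o{X_\varphi})\ \geq\ K_{\varphi^*G_2}(0)-\frac{G_1}{G_2}K_{G_1}.
\end{align*}
The leftmost expression is independent of $\varphi$, while $\sup_\varphi K_{\varphi^*G_2}(0)=K_{G_2}(z,[v])$ by definition, so taking the supremum over $\varphi$ on the right-hand side yields (\ref{2.1}). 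I do not anticipate any genuine obstacle; the only thing that needs genuine care is the homogeneity/scaling check that ensures both sides are well-defined on $P(TM)$, together with the observation that the correspondence $\varphi\mapsto X_\varphi$ is surjective onto $\{X:\pi_*X=v\}$, which is what allows the two suprema to be matched.
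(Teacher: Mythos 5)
Your proof is correct and follows essentially the same route as the paper: the key point in both is the identity $K_{\varphi^*G_2}(0)-\tfrac{G_1}{G_2}K_{\varphi^*G_1}(0)=\tfrac{2}{G_2}\p\b{\p}\log\tfrac{G_1}{G_2}(X_\varphi,\o{X_\varphi})$ with $\pi_*(X_\varphi)=v$, followed by the standard supremum manipulation (the paper phrases it as $\sup A-\sup B\leq\sup(A-B)$, you instead bound $K_{\varphi^*G_1}(0)$ by its supremum first — these are the same step). Note that for the stated inequality you only need $\{X_\varphi\}\subseteq\{X:\pi_*X=v\}$, not the surjectivity you mention, though the latter does no harm.
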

\begin{proof}
From Definition \ref{defn1}, one has at the point $(z,[v])\in P(TM)$, 
\begin{align}
\begin{split}
\left(K_{G_2}-\frac{G_1}{G_2}K_{G_1}\right)(z,[v])&=\sup_{\varphi}\{K_{\varphi^*G_2}(0)\}-\frac{G_1(v)}{G_2(v)}\sup_{\varphi}\left\{K_{\varphi^*G_1}(0)\right\}\\
&=\sup_{\varphi}\{K_{\varphi^*G_2}(0)\}-\sup_{\varphi}\left\{\frac{G_1(v)}{G_2(v)}K_{\varphi^*G_1}(0)\right\}\\
&\leq \sup_{\varphi}\left\{K_{\varphi^*G_2}(0)-\frac{G_1(v)}{G_2(v)}K_{\varphi^*G_1}(0)\right\}\\
&=\sup_{\varphi}\left\{\frac{2}{\varphi^*G_2}\frac{\p^2}{\p w\p\b{w}}\log \frac{\varphi^* G_1}{\varphi^* G_2}\right\}\\
&=\sup_{\varphi}\left\{\frac{2}{G_2(\lambda v)}\p\b{\p}\log \frac{G_1}{G_2}\left((\varphi_*)_*(\frac{\p}{\p w}),\o{(\varphi_*)_*(\frac{\p}{\p w}})\right)\right\}\\
&\leq \frac{2}{G_2(v)}\sup_{\pi_*(X)=v}\left\{\p\b{\p}\log \frac{G_1}{G_2}(X,\o{X})\right\}.
\end{split}
\end{align}
where the supremum is taken over all the holomorphic map $\varphi:\Delta\to M$ with $\varphi(0)=z$, $\varphi'(0)=\lambda v$ for some $\lambda\in \mb{C}^*$.
\end{proof}

\begin{rem}\label{rem1}
From the proof of above proposition, if $G_1, G_2$ only satisfy  ${\bf F1)}, {\bf F3)}$ and nonnegative, then (\ref{2.1}) also holds outside the zero points of $G_1G_2$.
\end{rem}

\begin{rem}
If $G_1$ and $G_2$ are strongly pseudoconvex complex Finsler metrics on $M$, then there is another upper-bound for $K_{G_2}-\frac{G_1}{G_2}K_{G_1}$. Denote by $\left(\frac{\delta}{\delta z^i}\right)_{\alpha}$ the horizontal lifting of $\frac{\p}{\p z^i}$ with respect to $G_\alpha, \alpha=1,2$. For any smooth function $f$ on $TM$, the horizontal Laplacian is defined by
$$\Delta^\mc{H}_{G_2}f:=\frac{v^i\b{v}^j}{G_2}(\p\b{\p}f)\left(\left(\frac{\delta}{\delta z^i}\right)_2,\left(\frac{\delta}{\delta \b{z}^j}\right)_2\right).$$
From (\ref{defn22}), one has
	\begin{align*}
		&\quad 2\Delta^{\mc{H}}_{G_2}\log \frac{G_1}{G_2}=K_{G_2}+\frac{2}{G_2}(\p\b{\p}\log G_1)\left(v^i\left(\frac{\delta}{\delta z^i}\right)_2,\o{v^i\left(\frac{\delta}{\delta z^i}\right)_2}\right)\\
		&=K_{G_2}+\frac{2}{G_2}(\p\b{\p}\log G_1)\left(v^i\left(\frac{\delta}{\delta z^i}\right)_1+v^i\left(\left(\frac{\delta}{\delta z^i}\right)_2-\left(\frac{\delta}{\delta z^i}\right)_1\right),\o{v^i\left(\frac{\delta}{\delta z^i}\right)_1+v^i\left(\left(\frac{\delta}{\delta z^i}\right)_2-\left(\frac{\delta}{\delta z^i}\right)_1\right)}\right)\\
		&=K_{G_2}-\frac{G_1}{G_2}K_{G_1}+\frac{2}{G_2}(\p\b{\p}\log G_1)\left(v^i\left(\left(\frac{\delta}{\delta z^i}\right)_2-\left(\frac{\delta}{\delta z^i}\right)_1\right),\o{v^i\left(\left(\frac{\delta}{\delta z^i}\right)_2-\left(\frac{\delta}{\delta z^i}\right)_1\right)}\right)\\
		&\geq K_{G_2}-\frac{G_1}{G_2}K_{G_1},
	\end{align*}
	where the last inequality because $G_1$ is strongly pseudoconvex.
\end{rem}

As an application of the above proposition, we give a direct proof of the following Schwarz Lemma.
\begin{cor}[{\cite[Theorem 4.1]{Shen}}]
	Let $f$ be a holomorphic map between two complex Finsler manifolds $(M, G)$ and $(N,G_1)$ with $M$ compact. If $K_{G_1}\leq -A, K_{G}\geq -B$ for  $A>0$, $B\geq 0$, then
	\begin{align}\label{1.3}
	f^*G_1\leq \frac{B}{A}G.
	\end{align}
\end{cor}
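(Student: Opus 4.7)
The plan is to apply Proposition \ref{Prop1} to the pullback pseudo-metric $f^*G_1$ (in the role of $G_1$) and the background metric $G$ (in the role of $G_2$), combined with a maximum principle on the compact projectivization $P(TM)$. Since $f^*G_1$ generally vanishes on the kernel of $f_*$ and is only a Finsler pseudo-metric, Proposition \ref{Prop1} does not apply verbatim; however Remark \ref{rem1} asserts that (\ref{2.1}) still holds at any point where the product $G_1 G_2$ is positive, provided the two ``metrics'' satisfy only {\bf F1)} and {\bf F3)} and are nonnegative, conditions clearly inherited by $f^*G_1$ on its nondegenerate locus.

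Concretely, I would consider the ratio $u := f^*G_1/G$, which is $\mb{C}^*$-invariant along fibers of $TM$ and hence descends to a continuous function on the compact manifold $P(TM)$, smooth on the open subset $\{f^*G_1>0\}$. If $u\equiv 0$ there is nothing to prove, so assume $u$ attains a positive maximum $u_{\max}$ at some $(z_0,[v_0])\in P(TM)$. Lifting to $(z_0,v_0)\in (TM)^o$, the function $\log u$ is smooth in a neighborhood, and the standard complex maximum principle (restrict $\log u$ to complex lines through $(z_0,v_0)$) gives $\p\b{\p}\log u(X,\b X)\leq 0$ for every $X\in T_{(z_0,v_0)}(TM)$. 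In particular the supremum on the left-hand side of (\ref{2.1}) is $\leq 0$ at this point.

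Applying Proposition \ref{Prop1} via Remark \ref{rem1} with $G_1\leftarrow f^*G_1$, $G_2\leftarrow G$ at $(z_0,[v_0])$ then yields
\begin{align*}
0 \;\geq\; K_G(z_0,[v_0]) - u_{\max}\, K_{f^*G_1}(z_0,[v_0]).
\end{align*}
Since $u_{\max}>0$ forces $f_*v_0\neq 0$, one has the pullback identity $K_{f^*G_1}(z_0,[v_0]) = K_{G_1}(f(z_0),[f_*v_0])\leq -A$, which is immediate from Definition \ref{defn1} by pre-composing test discs $\varphi:\Delta\to M$ with $f$, while the hypothesis on $(M,G)$ gives $K_G(z_0,[v_0])\geq -B$. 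Rearranging produces $u_{\max}\cdot A\leq B$, i.e.\ $u\leq B/A$ on all of $P(TM)$, which is exactly (\ref{1.3}).

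The only delicate point is the degeneracy of the pullback $f^*G_1$ and the corresponding failure of smoothness of $u$ along $\{f_*v=0\}$. Remark \ref{rem1} is the essential tool that bypasses this issue, and it applies cleanly because the maximum principle forces the relevant point to lie in the open locus where $f^*G_1>0$. Compactness of $M$ makes the maximum principle immediate, so no Omori--Yau machinery is required here, in contrast to the complete but noncompact setting of Theorem \ref{thm1}.
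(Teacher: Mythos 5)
Your proof is correct and follows essentially the same route as the paper: the same ratio $u=f^*G_1/G$, the maximum principle at an interior maximum on the compact $P(TM)$, Remark \ref{rem1} to cover the degenerate locus, and the decreasing property of the holomorphic sectional curvature under $f$. The only quibble is that pre-composing test discs with $f$ gives the inequality $K_{f^*G_1}\leq K_{G_1}\circ f_*$ rather than the equality you assert, but this is exactly the direction your argument uses, so nothing is lost.
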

\begin{proof}
Set
$$u=\frac{f^*G_1}{G},$$
which is a smooth function on $P(TM)$. If $\max_{(z,[v])\in P(TM)}u(z,[v])=0$, then (\ref{1.3}) holds obviously. Now we assume that
$$\max_{(z,[v])\in P(TM)}u(z,[v])=u(z_0,[v_0])>0.$$
By Remark \ref{rem1}, one has
\begin{align}
0\geq \frac{2}{G(v_0)}\sup_{\pi_*(X)=v_0}\left\{\p\b{\p}\log u(X,\o{X})\right\}\geq (K_{G}-uK_{f^*G_1})(z_0,[v_0]).
\end{align}
Note that
\begin{align}\label{1.5}
K_{f^*G_1}([v])&=\sup_{\varphi'=\lambda v}\{K_{\varphi^*f^*G_1}(0)\}\leq \sup_{\psi'=\lambda f_*(v)}\{K_{\psi^*G_1}(0)\}=K_{G_1}([f_*(v)])=(f^*G_1)[v].
\end{align}
So
\begin{align}
(K_{G}-u f^*K_{G_1})(z_0,[v_0])\leq 0.
\end{align}
By the assumptions for $K_{G_1}$ and $K_{G_2}$, one obtains
\begin{align}
	f^*G_1\leq \frac{B}{A}G.
	\end{align}
\end{proof}

Next we consider the case of a holomorphic map from a complete Riemann surface to a Finsler manifold with negative holomorphic sectional curvature. We assume that $(M,G)$ is a complete Riemann surface, the fundamental form is
\begin{align}
\Phi=\sqrt{-1}\lambda dz\wedge d\b{z},\quad \lambda=\frac{1}{2}G_{1\b{1}}.
\end{align}
and $ds^2=2\lambda dz\otimes d\b{z}$. The holomorphic sectional curvature of $G$ is
\begin{align}
K_G=-\frac{1}{\lambda}\frac{\p^2}{\p z\p\b{z}}\log\lambda,
\end{align}
which is exactly the Gaussian curvature.

Now we have the following generalization of Yau's Schwarz lemma \cite[Theorem 2']{Yau1}

\begin{thm}\label{thm1}
Let $(M,G)$ be a complete Riemann surface with curvature bounded from below by a constant $K_1$. Let $(N, G_1)$ be another Finsler manifold with holomorphic sectional curvature bounded from above by a negative constant $K_2$. Then for any holomorphic map $f$ from $M$ to $N$,
\begin{align}\label{1.4}
\frac{f^*G_1}{G}\leq \frac{K_1}{K_2}.
\end{align}
\end{thm}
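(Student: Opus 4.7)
The strategy is to combine Proposition \ref{Prop1} with the almost maximum principle of Omori and Yau, in the spirit of Yau's original Schwarz lemma. Since $\dim_{\mathbb{C}} M = 1$, the projectivized tangent bundle $P(TM)$ is canonically identified with $M$, and the non-negative ratio $u := f^*G_1/G$ descends to a function on $M$, smooth wherever it is positive. If $u\equiv 0$ then $f$ is constant and \eqref{1.4} is trivial, so assume $S:=\sup_M u>0$ and work on $\{u>0\}$.

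I would apply Proposition \ref{Prop1} in its pseudo-Finsler form (Remark \ref{rem1}) with the roles of $G_1, G_2$ there played by $f^*G_1, G$. Since $u$ is a pullback from $M$, it is constant along the fibres of $\pi:TM\to M$, so the supremum on the left of \eqref{2.1} reduces to the Laplace--Beltrami operator $\Delta_G\log u$ of the K\"ahler metric on the Riemann surface $(M,G)$ (the vertical parts in the horizontal lift annihilate $\log u$). Combining with the pullback estimate \eqref{1.5}, $K_{f^*G_1}\leq K_2<0$, and the hypothesis $K_G\geq K_1$, this yields the key differential inequality
\[
\Delta_G\log u \;\geq\; K_G - u\,K_{f^*G_1} \;\geq\; K_1 - K_2\,u \qquad\text{on } \{u>0\}.
\]

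Since $(M,G)$ is complete with Gauss (equivalently, Ricci) curvature bounded below by $K_1$, the Omori--Yau almost maximum principle produces, for any $C^2$ function bounded above, a sequence of near-maximizing points along which the Laplacian is arbitrarily small. Assuming first that $u$ is bounded, applying the principle to $\log u$ gives $\{z_k\}$ with $u(z_k)\to S$ and $\Delta_G\log u(z_k)\leq 1/k$; inserting this into the displayed inequality and letting $k\to\infty$ yields $K_1-K_2 S\leq 0$, and dividing by the negative constant $K_2$ flips the sign to give $S\leq K_1/K_2$, which is \eqref{1.4}.

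The main obstacle is the a priori possibility that $u$ is unbounded, so that $\log u$ is not bounded above and Omori--Yau cannot be applied directly. I would address this by standard penalization: for each $\epsilon>0$ apply the principle to the bounded function $\log u - \epsilon\rho$, where $\rho$ is a smoothed exhaustion of $M$ whose Laplacian is controlled via the Laplacian comparison theorem (whose hypothesis $K_G\geq K_1$ is satisfied), extract a near-max sequence, pick up only an $O(\epsilon)$-error in the differential inequality, and let $\epsilon\to 0$. Equivalently one could apply the principle to the bounded composition $\psi(u)=u/(1+u)$ and absorb the resulting lower-order gradient terms using the $C^1$-part of the principle. Either route recovers $S\leq K_1/K_2$ and completes the argument.
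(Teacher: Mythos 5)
Your proposal is correct and follows essentially the same route as the paper: reduce Proposition \ref{Prop1} to the differential inequality $\Delta\log u\gtrsim K_1-K_2u$ via the decreasing property \eqref{1.5}, then apply the Omori--Yau almost maximum principle to a bounded reparametrization of $u$ to handle possible unboundedness. Your second suggested device, $\psi(u)=u/(1+u)$, is interchangeable with the paper's choice $\varphi(u)=(1+u)^{-1/2}$ (applied to $-\varphi\circ u$), and the gradient-term absorption you sketch is exactly the asymptotic bookkeeping the paper carries out.
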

\begin{proof}
Set $$u=\frac{f^*G_1}{G}.$$
Since $\dim M=1$, one has that $P(TM)\simeq M$ and hence $u$ is a smooth function on $M$.
For the case of $\max_{z\in M} u(z)=0$, the inequality (\ref{1.4}) is obviously. So one may assume that $\max_{z\in M} u(z)=u(z_0)>0$ for some $z_0\in M$.

Let $\varphi(t)=(1+t)^{-1/2},$ $t>0$, then
\begin{align}
\varphi'(t)=-\frac{1}{2(1+t)^{\frac{3}{2}}}<0,\quad \varphi''(t)=\frac{3}{4(1+t)^{\frac{5}{2}}}>0.
\end{align}
Then applying the Almost Maximum Principle of Omori and Yau (\cite{Yau2, Omori}, \cite[\S 6.2]{Kim}) to $-\varphi\circ u$, there exists a sequence $\{p_{\nu}\in M|\nu=1,2,\ldots\}$ such that
\begin{align}\label{Almost}
\inf_M\varphi\circ u=\lim_{\nu\to\infty}\varphi\circ u(p_{\nu}),\quad \lim_{\nu\to\infty}\n(\varphi\circ u)|_{p_{\nu}}=0, \quad\liminf_{\nu\to\infty}\Delta(\varphi\circ u)|_{p_{\nu}}\geq 0.
\end{align}
The first equation of (\ref{Almost}) implies that
\begin{align}
\sup_M u=\lim_{\nu\to\infty}u(p_{\nu}).
\end{align}
From (\ref{1.5}) and Proposition \ref{Prop1}, one has
\begin{align}\label{1.666}
\begin{split}
\Delta u&=\frac{2}{\lambda}\frac{\p^2 u}{\p z\p\b{z}}=\frac{2}{\lambda}\left(u\frac{\p^2\log u}{\p z\p\b{z}}+\frac{1}{u}\left|\frac{\p u}{\p z}\right|^2\right)\\
&\geq 2u(K_G-uK_{f^*{G_1}})\geq 2u(K_G-uf^*K_{{G_1}})\\
&\geq 2u(K_1-uK_2).
\end{split}
\end{align}
Using the fact that
\begin{align}
\liminf_{\nu\to\infty}\Delta(\varphi\circ u)|_{p_{\nu}}=\liminf_{\nu\to\infty}\left(\varphi''(u(p_{\nu}))\|\n u|_{p_{\nu}}\|^2+\varphi'(u(p_{\nu}))\Delta u|_{p_{\nu}}\right),
\end{align}
and combining (\ref{Almost}), (\ref{1.666}) and $\varphi'(t)<0$, one gets for any $\epsilon>0$, there exists $N>0$ such that at each $p_{\nu}$ with $\nu\geq N$,
\begin{align}
2u(K_1-uK_2)\varphi'(u)+\varphi''(u)\|\n u\|^2>-\epsilon, \quad (\varphi'(u))\|\n u\|^2=\|\n(\varphi\circ u)\|^2<\epsilon^2.
\end{align}
Therefore, 
\begin{align}
K_1-uK_2<\frac{1}{2u}\left(\frac{\epsilon}{|\varphi'(u)|}+\frac{\epsilon^2\varphi''(u)}{|\varphi'(u)|^3}\right).
\end{align}
So $\sup_M u$ is bounded because the left-hand side is $O(u(p_{\nu}))$ and right-hand side is $O(\epsilon(u(p_{\nu}))^{\frac{1}{2}}+\epsilon^2u(p_{\nu}))$ as $\nu\to\infty$. Taking $\nu\to\infty$ and $\epsilon\to 0$, one gets
\begin{align}
\sup_M u\leq \frac{K_1}{K_2}.
\end{align}
\end{proof}

For a complex manifold, there is a canonical Kobayashi metric $k_M$.
\begin{defn}[\cite{Aikou1, Ko5}]\label{defn2}
The \textsl{Kobayshi metric} $k_M: TM\to \mb{R}$ of a complex manifold $M$ is defined by
$$k_M(z,v):=\inf_\varphi\left\{\frac{1}{r}; \exists f\in \text{Hom}(\Delta(r),M), f(0)=z, f'(0)=v\right\}$$
for any $(z,v)\in TM$, where the infinimum is taken for all holomorphic map $f$ from  disc $\Delta(r)$ of radius $r$ to $M$ with $f(0)=z$ and $f'(0)=v$.

For any tangent vector  $V$ on $M$, there exists a $(1,0)$-type vector $X$ such that $V=X+\b{X}=2\text{Re}X$, $k_M(V):=2k_M(X)$.
The \textsl{Kobayashi pseudo-distance} $d^K_M(p,q)$ is defined by
$$d^K_M(p,q)=\inf_c\int_0^1 k_M(c(t),c'(t))dt.$$
where infinimum is taken all curve of $C^1$-class on $M$.

A complex manifold $M$ is said to be \textsl{Kobayashi hyperbolic} if the pseudo-distance $d^K_M$ is the distance in the strict sense.
\end{defn}

 As a simple application, we have the following interesting result.
\begin{cor}[\cite{Aikou1, Ko1}]
Let $(N,G_1)$ be a complex Finsler manifold with holomorphic sectional curvature bounded from above by a negative constant $K_2$. Then we have
\begin{align}
4k^2_N\geq -K_2 G_1,
\end{align}
where $k_N$ is the Kobayashi metric of $N$. In particular, $N$ is Kobayashi hyperbolic.
\end{cor}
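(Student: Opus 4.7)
My plan is to deduce the inequality by applying Theorem \ref{thm1} to holomorphic maps from discs $\Delta(r)$ equipped with Poincar\'e metrics, and then passing to the infimum over $r$ so as to recover $k_N$ on the right-hand side. Fix $(z,v)\in TN$ with $v\neq 0$, and for every admissible radius $r>0$ take a holomorphic map $f:\Delta(r)\to N$ with $f(0)=z$ and $f'(0)=v$; these are exactly the data entering the infimum defining $k_N(z,v)$.

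Next I would equip $\Delta(r)$ with the Poincar\'e metric
\[
G(w,u)=\frac{4r^2}{(r^2-|w|^2)^2}|u|^2,
\]
which is complete and has constant Gaussian curvature $-1$, as follows from a direct computation of $K_G=-\frac{1}{\lambda}\frac{\p^2\log\lambda}{\p w\p\b{w}}$ with $\lambda=\tfrac{1}{2}G_{1\b{1}}$ (the formula recalled in Section \ref{section1}). Applying Theorem \ref{thm1} with $K_1=-1$ then yields $f^*G_1\le (-1/K_2)\,G$ on $\Delta(r)$. Evaluating both sides at $w=0$ on the tangent vector $\p/\p w$, the left-hand side equals $G_1(z,v)$ while the right-hand side equals $-4/(K_2r^2)$, giving $1/r^2\geq -K_2\,G_1(z,v)/4$. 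Taking the infimum over admissible $r$ produces $k_N(z,v)^2\geq -K_2\,G_1(z,v)/4$, which is exactly the asserted inequality $4k_N^2\geq -K_2G_1$.

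For Kobayashi hyperbolicity I would argue that the just-established inequality, together with the positivity of $G_1$ off the zero section, forces $k_N$ itself to be positive off the zero section. A standard argument (bounding $k_N$ from below by a fixed Hermitian norm on a compact neighborhood of any given point) then shows that the induced pseudo-distance $d_N^K$ is a genuine distance, so $N$ is Kobayashi hyperbolic.

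The only mildly delicate step is the bookkeeping of constants in the Poincar\'e metric so that the Gaussian curvature comes out to $-1$ and $G(0,1)=4/r^2$; the factor $4$ in the statement appears for precisely this reason. Beyond that, the proof is essentially a single application of Theorem \ref{thm1}, so I do not expect a substantive obstacle.
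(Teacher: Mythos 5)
Your argument is correct and is essentially the paper's own proof: equip $\Delta(r)$ with a complete Poincar\'e metric of constant negative curvature, apply Theorem \ref{thm1}, evaluate at the origin, and take the infimum over $r$ in the definition of $k_N$ (the paper normalizes the Poincar\'e metric to curvature $-4$ rather than $-1$, which changes nothing). The constants check out, and the hyperbolicity conclusion via $k_N\geq \tfrac{1}{2}\sqrt{-K_2}\,\sqrt{G_1}$ matches the paper's one-line deduction.
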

\begin{proof} We equip the $r$-disc $\Delta(r)$ with the complete Poincar\'e metric $G=2\lambda dz\otimes d\b{z}$, $\lambda=\frac{r^2}{2(r^2-|z|^2)}$. 
It is well known that its Gaussian curvature is $K_{G}=-4<0$. By Theorem \ref{thm1}, one has
$$f^*G_1\leq \frac{-4}{K_2}G.$$
Therefore,
$$-K_2G_1(f(0),f'(0))\leq \frac{4r^2}{(r^2-|z|^2)^2}|_{z=0}=\frac{4}{r^2}.$$
By the definition of $k_N$, one has
\begin{align}\label{1111}4k_N^2\geq -K_2G_1.\end{align}
From (\ref{1111}) and $K_2<0$, $d^K_N$ is a distance in the strict sense.
By Definition \ref{defn2}, $N$ is Kobayashi hyperbolic.
\end{proof}

\section{Semi-positive holomorphic sectional curvature}
In this section, we assume that the holomorphic sectional curvature of a strongly pseudoconvex complex Finsler manifold $(M,G)$ is semi-positive but not identically zero, i.e. $K_G\geq 0$ and there exists a point $(z_0,[v_0])\in P(TM)$ such that $K_G(z_0,[v_0])>0$. Firstly, we review Berndtsson's curvature formula of direct image bundles (cf. \cite{Bern2, Bern3, Bern4, Ke1, Liu1}).
\subsection{Curvature of direct image bundles}

Let $\pi: \mc{X}\to M$ be a holomorphic fibration with compact fibers, $L$ a relative ample line bundle over $\mc{X}$, i.e. there eixsts a metric (weight) $\phi$ of $L$ such that $\sqrt{-1}\p\b{\p}\phi|_{\mc{X}_z}>0$ for any $z\in M$, $\mc{X}_z:=\pi^{-1}(z)$. We denote by $(z;w)=(z^1,\cdots, z^n; w^1,\cdots, w^m)$ a local admissible holomorphic coordinate system of $\mc{X}$ with $\pi(z;w)=z$, where $m$ denotes the dimension of fibers.
For any smooth function $\phi$ on $\mc{X}$, we denote
$$\phi_{;i}=\frac{\p \phi}{\p z^{i}},\quad \phi_{;\b{j}}=\frac{\p \phi}{\p \b{z}^{j}},
\quad \phi_{\alpha}=\frac{\p \phi}{\p w^\alpha},\quad \phi_{\b{\beta}}=\frac{\p \phi}{\p \b{w}^\beta},$$
where $1\leq i,j\leq n, 1\leq \alpha,\beta\leq m$.

For any smooth metric $\phi$ of $L$ with $\sqrt{-1}\p\b{\p}\phi|_{\mc{X}_z}>0$, set
\begin{align}\label{horizontal}
  \frac{\delta}{\delta z^{i}}:=\frac{\p}{\p z^{i}}-\phi_{\b{\beta};i}\phi^{\b{\beta}\alpha}\frac{\p}{\p w^{\alpha}},
\end{align}
where $(\phi^{\b{\beta}\alpha})$ is the inverse matrix of $(\phi_{\alpha\b{\beta}})$. By a routine computation, one shows easily that $\{\frac{\delta}{\delta z^{i}}\}_{1\leq i\leq n}$ spans a well-defined horizontal subbundle of $T\mc{X}$. Let $\{dz^{i};\delta w^\alpha\}$
denote the dual frame of $\left\{\frac{\delta}{\delta z^{i}}; \frac{\p}{\p w^\alpha}\right\}$. One has
$$\delta w^\alpha=dw^\alpha+\phi^{\alpha\b{\beta}}\phi_{\b{\beta};i}dz^{i}.$$
Denote
\begin{align}\label{HV}
\p^V=\frac{\p}{\p w^{\alpha}}\otimes \delta w^{\alpha},\quad \p^H=\frac{\delta}{\delta z^{i}}\otimes dz^{i}.
\end{align}
Clearly, the operators $\p^V$ and $\p^H$ are well-defined.

The geodesic curvature $c(\phi)$ of $\phi$ (\cite[Definition 2.1]{Choi}) is defined by
\begin{align}\label{cphi}
  c(\phi)=c(\phi)_{i\b{j}}\sqrt{-1} dz^{i}\wedge d\b{z}^{j}=\left(\phi_{;i\b{j}}-\phi_{\b{\beta};i}\phi^{\alpha\b{\beta}}\phi_{\alpha;\b{j}}\right)\sqrt{-1} dz^{i}\wedge d\b{z}^{j},
\end{align}
which is clearly a horizontal real $(1,1)$-form on $\mc X$. By a direct computation, one has
\begin{align}
\sqrt{-1}\p\b{\p}\phi=c(\phi)+\sqrt{-1}\phi_{\alpha\b{\beta}}\delta w^\alpha\wedge \delta \b{w}^\beta.
\end{align}

We consider the direct image sheaf $$E:=\pi_*(K_{\mc{X}/M}+L).$$ Then $E$ is a holomorphic vector bundle. In fact, for any point $p\in M$, taking a local coordinate neighborhood $(U;\{z^{i}\})$ of $p$, then $\phi+\beta\sum_{i=1}^n |z^i|^2$ is a metric on the line bundle $L\to \mc{X}|_U$, whose curvature is
$$\sqrt{-1}\p\b{\p}\phi+\beta\sqrt{-1}\sum_{i=1}^n dz^{i}\wedge d\b{z}^{i}.$$
By taking $\beta$ large enough,  the curvature of $\phi+\beta\sum_{i=1}^n |z^i|^2$ is positive. By the same argument as in \cite[\S 4, page 542]{Bern2},  there exists a local holomorphic frame for $E$. So $E$ is a holomorphic vector bundle.

%By Ohsawa-Takegoshi extension (cf. \cite[\S 8. Appendix]{Bern2}), for a basis $\{u_i\}$ of $E_p$, there exists  locally holomorphic extension $\{\tilde{u}_i\}$ in $K_{\mc{X}_V}$ for some small neighborhood $V\subset U$ of $p$ whose restriction to each fiber is $u_i$. So we get a local frame for $E$. We define a complex structure on $E$ by saying that $u$ is a local holomorphic section of $E$ if $u\wedge dz^1\wedge \cdots\wedge dz^m$ is a local holomorphic section of $K_{\mc{X}}$.

Following Berndtsson (cf. \cite{Bern2, Bern3, Bern4}), we define the following $L^2$-metric on
the direct image bundle $E:=\pi_*(K_{\mc{X}/M}+L)$. Let $\{u^A\}_{1\leq A\leq\text{rank}E}$ be a local frame of $E$. Set
 \begin{align}\label{L2 metric}
 h^{A\b{B}}=\langle u^A, u^B\rangle=\int_{\mc{X}_z}u^A\o{u^B}e^{-\phi}.
 \end{align}

Note that $u^A$ can be written locally as 
\begin{align}
	u^A=f^A dw\otimes s_L
\end{align}
 where $s_L$ is a local holomorphic frame of $L$ with $e^{-\phi}=|s_L|^2$,  and so locally
$$u^A\o{u^B}e^{-\phi}:=(\sqrt{-1})^{m^2}f^A\o{f^B} |s_L|^2dw\wedge d\b{w}=(\sqrt{-1})^{m^2}f^A\o{f^B} e^{-\phi}dw\wedge d\b{w},$$
where $dw=dw^1\wedge \cdots\wedge dw^m$ is the fiber volume.

 The following theorem actually was proved by Berndtsson \cite[Theorem 1.2]{Bern4}.
\begin{thm}[{\cite[Theorem 1.2]{Bern4}}]
\label{thm4} 
Denote by $\Theta^E=\Theta^{A\b{B}}_{\quad i\b{j}}u_A\otimes \o{u_B}\otimes dz^i\wedge d\b{z}^j$ the Chern curvature of the $L^2$-metric (\ref{L2 metric}) on $E$, where $\{u_A\in E^*\}$ is the dual frame of $\{u^A\}$.
Then at the point $z\in M$, 
\begin{align}\label{cur}
\Theta^{A\b{B}}_{\quad i\b{j}}=\int_{\mc{X}_z}c(\phi)_{i\b{j}}u^A\o{u}^B e^{-\phi}+\langle(1+\Box')^{-1}i_{\b{\p}^V\frac{\delta}{\delta z^{i}}}u^A,i_{\b{\p}^V\frac{\delta}{\delta z^{j}}}u^B\rangle.
\end{align}
 Here $\Box'=\n'\n'^*+\n'^*\n$ denotes the Laplacian on $\mc{X}_z$ associated with the $(1,0)$-part of the Chern connection on $L|_{\mc{X}_z}$, $\b{\p}^V\frac{\delta}{\delta z^{i}}=\frac{\p (-\phi_{\b{\beta};i}\phi^{\b{\beta}\alpha})}{\p\b{w}^\gamma}\delta\b{w}^{\gamma}\otimes \frac{\p}{\p w^{\alpha}}$.
 
Moreover, \begin{align}\label{wan111}
 	\langle(1+\Box')^{-1}i_{\b{\p}^V\frac{\delta}{\delta z^{i}}}u^A,i_{\b{\p}^V\frac{\delta}{\delta z^{j}}}u^B\rangle a_A^i\o{a_B^j}=\langle(1+\Box')^{-1}(i_{\b{\p}^V\frac{\delta}{\delta z^{i}}}u^A a_A^i),(i_{\b{\p}^V\frac{\delta}{\delta z^{j}}}u^B a_B^j)\rangle \geq 0
 \end{align}
for any element $a=a^{i}_A u^A\otimes\frac{\p}{\p z^i}\in E\otimes TM$, and 
the equality holds if and only if $$\frac{\p (\phi_{\b{\beta};i}\phi^{\b{\beta}\alpha})}{\p\b{w}^\gamma}f^A a^i_A=0.$$
\end{thm}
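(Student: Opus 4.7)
The plan is to derive Berndtsson's curvature formula by computing $\Theta^E$ directly from its definition as the curvature of the Chern connection of the $L^2$-metric $h^{A\b{B}}$, and to organize the resulting expression according to the horizontal--vertical decomposition (\ref{HV}) of $T\mc{X}$. Throughout, the compactness of the fibers $\mc{X}_z$ is used freely to apply integration by parts on them.

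First I would choose a local holomorphic frame $\{u^A\}$ of $E$ around a reference point $z_0\in M$, write $u^A=f^A\, dw\otimes s_L$ with $f^A$ holomorphic on $\mc{X}|_U$ and $|s_L|^2=e^{-\phi}$, and adjust the frame to be normal at $z_0$, i.e.\ $h^{A\b{B}}(z_0)=\delta^{AB}$ and $\p h^{A\b{B}}(z_0)=0$. For a Chern connection this reduces the curvature at $z_0$ to $\Theta^{A\b{B}}_{\quad i\b{j}}=-\p_i\b{\p}_j h^{A\b{B}}(z_0)$, so it suffices to evaluate the second mixed derivative of $h^{A\b{B}}$.

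Next I would differentiate under the integral sign using the horizontal lift $\delta/\delta z^i$ from (\ref{horizontal}), which compensates for the fact that the domain $\mc{X}_z$ moves with $z$. Against the dual frame $(dz^i,\delta w^\alpha)$, the lift is tautologically compatible with the fiber volume, so one $\p_i$-derivative of $h^{A\b{B}}$ becomes a fiber integral of the Lie derivative of $f^A\o{f^B}e^{-\phi}\,dw\w d\b{w}$ along $\delta/\delta z^i$. Applying $\b{\p}_j$ next and expanding $\p\b{\p}\phi$ via the identity $\sqrt{-1}\p\b{\p}\phi=c(\phi)+\sqrt{-1}\phi_{\alpha\b{\beta}}\delta w^\alpha\w\delta\b{w}^\beta$ splits the integrand into a purely horizontal piece, contributing the geodesic curvature term $\int_{\mc{X}_z}c(\phi)_{i\b{j}}u^A\o{u^B}e^{-\phi}$ of (\ref{cur}), and a mixed piece that still needs to be identified.

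The mixed piece reflects the failure of $\delta/\delta z^i$ to be of type $(1,0)$ on the total space: its $(0,1)$-defect is $\b{\p}^V(\delta/\delta z^i)$, and contracting this with $u^A$ gives the $L$-valued $(0,1)$-form $i_{\b{\p}^V\delta/\delta z^i}u^A$ on $\mc{X}_z$. Writing the horizontal variation of $u^A$ as a holomorphic section plus an orthogonal correction, where the correction is obtained by solving a $\b{\p}$-equation along the fiber, the appropriate Bochner--Kodaira identity identifies its contribution to the curvature with $\langle(1+\Box')^{-1}i_{\b{\p}^V\delta/\delta z^i}u^A,i_{\b{\p}^V\delta/\delta z^j}u^B\rangle$, producing the second term of (\ref{cur}); the $+1$ in $(1+\Box')^{-1}$ arises from absorbing the fiberwise positivity $\sqrt{-1}\phi_{\alpha\b{\beta}}\delta w^\alpha\w\delta\b{w}^\beta$ into the Laplacian. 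The positivity statement (\ref{wan111}) is then automatic, since $(1+\Box')^{-1}$ is nonnegative and self-adjoint on the fiberwise $L^2$-space, and it vanishes precisely when its argument vanishes, which unwinds to $\frac{\p(\phi_{\b{\beta};i}\phi^{\b{\beta}\alpha})}{\p\b{w}^\gamma}f^Aa^i_A=0$. The main obstacle is the careful identification of the resolvent $(1+\Box')^{-1}$ with the correct Laplacian in families and keeping track of the orthogonal projection onto fiberwise holomorphic sections, which is a delicate piece of $\b{\p}$-Hodge theory on a smooth family of compact manifolds; everything else is bookkeeping of the horizontal--vertical split.
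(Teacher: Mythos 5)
Your outline is essentially sound, but note that the paper does not actually reprove the curvature formula (\ref{cur}): it cites \cite[Theorem 1.2]{Bern4} and the proof of \cite[Theorem 3.1]{FLW1}, and the only argument the paper supplies is the characterization of equality in (\ref{wan111}). On that part your reasoning coincides with the paper's: since $\Box'\geq 0$ the operator $(1+\Box')^{-1}$ is positive definite on the fibrewise $L^2$-space, so the quadratic form vanishes iff $i_{\b{\p}^V\frac{\delta}{\delta z^{i}}}u^A a^i_A=0$, and because contraction of $dw=dw^1\w\cdots\w dw^m$ with $\frac{\p}{\p w^{\alpha}}$ is injective in the index $\alpha$, this unwinds via (\ref{horizontal}) and (\ref{HV}) to $\frac{\p(\phi_{\b{\beta};i}\phi^{\b{\beta}\alpha})}{\p\b{w}^{\gamma}}f^A a^i_A=0$, exactly as in the paper. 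For the formula itself, your sketch follows the same route as the cited proofs: normal frame at $z_0$, differentiation of the fibre integral along the horizontal lift, the decomposition $\sqrt{-1}\p\b{\p}\phi=c(\phi)+\sqrt{-1}\phi_{\alpha\b{\beta}}\delta w^{\alpha}\w\delta\b{w}^{\beta}$ producing the $c(\phi)_{i\b{j}}$ term, and the Bochner--Kodaira identity with fibre metric $\sqrt{-1}\p\b{\p}\phi|_{\mc{X}_z}$ turning the mixed term into the resolvent term (this is indeed where the $+1$ in $(1+\Box')^{-1}$ comes from, since the curvature operator acts as the identity on $L$-valued $(m,1)$-forms). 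Two caveats: the step you describe as ``the appropriate Bochner--Kodaira identity identifies its contribution'' is the entire analytic content of Berndtsson's theorem --- the minimal solution of the fibrewise $\b{\p}$-equation and the orthogonal projection onto holomorphic $(m,0)$-forms --- and is asserted rather than executed, so as written your text is a roadmap to the cited proof rather than a self-contained replacement for it; and $\frac{\delta}{\delta z^{i}}$ is already of type $(1,0)$ --- what fails is its holomorphicity, and $\b{\p}^V\frac{\delta}{\delta z^{i}}$ measures precisely that failure, not a type defect.
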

\begin{proof} 
For  the proof of (\ref{cur}), one also can refer to the proof of  \cite[Theorem 3.1]{FLW1}.  The equality of (\ref{wan111}) holds if and only if 
$$i_{\b{\p}^V\frac{\delta}{\delta z^{i}}}u^A a_A^i=0.$$
From (\ref{horizontal}) and (\ref{HV}), the above equation is equivalent to 
 $\frac{\p (\phi_{\b{\beta};i}\phi^{\b{\beta}\alpha})}{\p\b{w}^\gamma}f^A a^i_A=0$.
\end{proof}
\subsection{Application on cotangent bundles}

In this subsection, we will apply Theorem \ref{thm4} to the cotangent bundle $E=T^*M$. In this case, $\mc{X}=P(TM)$,
\begin{align}
E=\pi_*(\mc{O}_{P(TM)}(1))=\pi_*(L+K_{\mc{X}/M})
\end{align}
where 
\begin{align}\label{2.11}
	L:=\mc{O}_{P(TM)}(1)-K_{\mc{X}/M}=(n+1)\mc{O}_{P(TM)}(1)+\pi^*\det TM
\end{align}
(see \cite[Proposition 2.2]{Kob1}). In this case, $\text{rank} E=\dim M=n$, the dimension of fibers $m=\dim \mc{X}_z=n-1$ for any $z\in M$. 

Recall that $(z;v)$ is a local coordinate system of $TM$ with respect to the natural frame $\left\{\frac{\p}{\p z^i}\right\}_{i=1}^n$, it gives a local holomorphic frame of $\mc{O}_{P(TM)}(-1)$ by 
\begin{align}
s_{\mc{O}_{P(TM)}(-1)}(z,[v])=\frac{1}{v^k}\sum_{i=1}^n v^i\pi^*\frac{\p}{\p z^i}
\end{align}
 on $U_k:=\{(z,[v])\in P(TM)|v^k\neq 0\}$.
 
Moreover, the natural projection 
\begin{align}
q: (TM)^o\to P(TM)\quad (z; v)\mapsto (z;[v]):=(z^1,\cdots, z^n; [v^1,\cdots, v^n]),	
\end{align}
gives a local coordinate system of $P(TM)$ by 
\begin{align}\label{2.4}
(z;w)=\left(z^1,\cdots,z^n; \frac{v^1}{v^k},\cdots,\frac{v^{k-1}}{v^k},\frac{v^{k+1}}{v^{k}},\cdots, \frac{v^n}{v^k}\right)
\end{align}
on $U_k$.

Let $s_{\mc{O}_{P(TM)}(1)}$ denote the dual local holomorphic section of $s_{\mc{O}_{P(TM)}(-1)}$. By (\ref{2.11}), there exist local frame $s_L$ of $L$ and local frame $s_{\pi^*\det TM}$ of $\pi^*\det TM$ on $U_k$ such that
\begin{align}\label{2.2}
s_{\mc{O}_{P(TM)}(1)}=dw\otimes s_L,\quad  s_L=s_{\mc{O}_{P(TM)}(-1)}^{\otimes-(n+1)}\otimes s_{\pi^*\det TM}.	
\end{align}
Let $G$ be a strongly pseudoconvex complex Finsler metric on $M$, it induces a metric on $\mc{O}_{P(TM)}(-1)$ by $$|s_{\mc{O}_{P(TM)}(-1)}|^2=\frac{1}{|v^k|^2}G_{i\b{j}}(z,v)v^i\b{v}^j=\frac{G(z,v)}{|v^k|^2}$$ 
on $U_k$. Let $g=(g_{i\b{j}})$ be a Hermitian metric on $M$. From (\ref{2.2}), there exists a metric $\phi_L$ on $L$ by
\begin{align}\label{2.6}
e^{-\phi_L}:=|s_L|^2:=|s_{\mc{O}_{P(TM)}(-1)}|^{-2(n+1)}|s_{\pi^*\det TM}|^2=|v^k|^{2(n+1)}G^{-(n+1)}\det g,
\end{align}
where $\det g:=\det(g_{i\b{j}})$.

 \begin{lemma}\label{9}
On $U_{k}=\{(z,[v])\in P(TM)|v^{k}\neq 0\}$, one has
 $$\det\left(\frac{\p^2\log G}{\p w^{\alpha}\p\b{w}^{\beta}}\right)=\frac{|v^{k}|^{2n}}{G^{n}}\det G,$$
 where $\det G:=\det(G_{i\b{j}})$.
 \end{lemma}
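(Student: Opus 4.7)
The strategy is to use homogeneity to peel off the $|v^k|^2$ factor from $G$, which turns the LHS into a principal minor of the fibrewise Hessian of $\log G$ on $(TM)^o$, and then to evaluate that minor by an adjugate argument on a rank-deficient Hermitian matrix.

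First, using $G(z,\lambda v)=|\lambda|^2 G(z,v)$, write $G=|v^k|^2\tilde G(z,w)$ on $U_k$, where $\tilde G(z,w)=G(z,v)$ with $v^k=1$ and $v^\alpha=w^\alpha$ for $\alpha\neq k$. Since $\log|v^k|^2$ contributes nothing to the mixed $w\b w$-derivatives,
$$\frac{\p^2\log G}{\p w^\alpha\p\b w^\beta}=\frac{\p^2\log\tilde G}{\p w^\alpha\p\b w^\beta}.$$
Evaluating at $w_0^\alpha=v^\alpha/v^k$, i.e.\ at the representative $v_\sharp:=v/v^k$ (which has $v_\sharp^k=1$), this becomes
$$H_{\alpha\b\beta}(z,v_\sharp):=\left(\frac{G_{\alpha\b\beta}}{G}-\frac{G_\alpha G_{\b\beta}}{G^2}\right)(z,v_\sharp),$$
so the LHS of the claim equals the $(k,k)$-principal minor $\det H^{(k,k)}(z,v_\sharp)$ of the $n\times n$ Hermitian matrix $H_{i\b j}$ on $(TM)^o$.

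Next, set $M:=G\,H=(G_{i\b j})-G^{-1}G_i G_{\b j}$. The identities from Lemma~\ref{1.111} (in particular $G_{i\b j}\b v^j=G_i$ and $G_{\b j}\b v^j=G$) give $M\b v=0$, and positive-definiteness of $(G_{i\b j})$ forces $\mathrm{rank}\,M=n-1$. Hence $\mathrm{adj}(M)$ is a rank-one Hermitian matrix with column space $\ker M=\mb{C}\b v$, so $\mathrm{adj}(M)=c\,\b v v^T$ for some real scalar $c$, and reading off the $(k,k)$-entry yields $\det M^{(k,k)}=c\,|v^k|^2$.

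The main obstacle is pinning down $c$, and I would do this by computing $\det(M+t\b v v^T)$ in two ways. The rank-one perturbation identity $\det(M+xy^*)=\det M+y^*\mathrm{adj}(M)x$ yields $tc\|v\|^4$. Writing the same expression as $A+XY^*$ (a rank-two perturbation of $A=(G_{i\b j})$) and applying the rank-two matrix determinant lemma $\det(A+XY^*)=\det A\cdot\det(I_2+Y^*A^{-1}X)$, the relations $A^{-1}p=\b v$, $p^*A^{-1}=v^T$, $p^*\b v=G$, $v^T\b v=\|v\|^2$ (with $p_i=G_i$) collapse the resulting $2\times 2$ determinant to $t\|v\|^4/G$. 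Equating gives $c=\det(G_{i\b j})/G$, so $\det H^{(k,k)}=G^{-(n-1)}\det M^{(k,k)}=|v^k|^2\det(G_{i\b j})/G^n$. Finally, evaluating at $v_\sharp$, where $G(v_\sharp)=G/|v^k|^2$ and $\det(G_{i\b j})(v_\sharp)=\det G_{i\b j}$ by the homogeneity degrees, produces $|v^k|^{2n}\det G/G^n$ as claimed.
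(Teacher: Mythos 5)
Your proof is correct. The outer layer — using $G=|v^k|^2\tilde G$ to identify the left-hand side with the $(k,k)$-cofactor of the fibrewise matrix $H_{i\b j}=\frac{G_{i\b j}}{G}-\frac{G_iG_{\b j}}{G^2}$, and then rescaling from $v_\sharp$ back to $v$ by the homogeneities of Lemma \ref{1.111} — is the same reduction the paper performs. Where you genuinely diverge is in evaluating that cofactor. The paper's route is a Gram-determinant argument: it views $\frac{1}{G}G_{i\b j}$ as a Hermitian inner product on the fibre, observes that $T=v^k\frac{\p}{\p v^k}$ is a unit vector whose orthogonal-complement projections $(\frac{\p}{\p v^i})^{\perp}$ have Gram matrix exactly $H_{i\b j}$, and compares determinants through the change of basis $B$ (with $|\det B|^2=|v^k|^2$) and the determinant-one Gram--Schmidt transition matrix. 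You instead note that $M=GH=(G_{i\b j})-G^{-1}G_iG_{\b j}$ is positive semi-definite of rank $n-1$ with kernel $\mb{C}\b v$, deduce $\operatorname{adj}(M)=c\,\b v v^{T}$, and determine $c$ by computing $\det(M+t\b v v^{T})$ twice, once by the rank-one matrix determinant lemma and once as a rank-two perturbation of $A=(G_{i\b j})$. All the inputs you invoke ($A\b v=p$, $p^{*}\b v=G$, the Cauchy--Schwarz argument for $\mathrm{rank}\,M=n-1$, degree-zero homogeneity of $G_{i\b j}$) follow from Lemma \ref{1.111}, and the $2\times2$ determinant does collapse to $t\|v\|^{4}/G$ because the one entry you cannot evaluate, $v^{T}A^{-1}\b v$, sits in the $(2,2)$ slot and is multiplied by the vanishing $(1,1)$ entry. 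So the argument is complete; it is a bit longer than the paper's orthogonalization, but it establishes the same cofactor identity by pure matrix algebra and makes explicit the structural fact (kernel of the Fubini--Study-type Hessian along $\b v$) that underlies the paper's computation.
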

 \begin{proof}
 Without loss of generality, we may assume that $k=1$. For any point $p\in U_1$, one has from (\ref{2.4}),
 \begin{align}\label{2.5}
 	q_*\left(\frac{\p}{\p v^1}\right)=-\sum_{i=2}^{n}\frac{v^{i}}{(v^1)^2}\frac{\p}{\p w^{i-1}},\quad  q_*\left(\frac{\partial}{\partial v^{k}}\right)=\frac{1}{v^{1}}\frac{\partial}{\partial w^{k-1}},\quad k\geq 2.
 \end{align}
Here $q_*: T(TM)^o\to TP(TM)$ denotes the differential of the natural projection $q: (TM)^o\to P(TM)$.

  Denote
 $T=v^{k}\frac{\partial}{\partial v^{k}}$ and
 $\langle \frac{\partial}{\partial v^{i}},\frac{\partial}{\partial v^{j}}\rangle:=\frac{1}{G}G_{i\b{j}}$, one has $\langle T,T\rangle=1$.
 With respect to the basis $\{T,\frac{\partial}{\partial v^{2}},\cdots,\frac{\partial}{\partial v^{n}}\}$, the matrix of $\langle\cdot,\cdot\rangle$ is
 $B(\frac{1}{G}G_{i\b{j}})\bar{B}^{T}$,
 where
 \begin{equation*}
    B=\begin{bmatrix}
    v^{1} & v^{2} & \dots & v^{n}\\
    0 & 1 & \dots & 0\\
    \vdots & \vdots &\ddots & \vdots\\
    0 &0 &\dots &1
    \end{bmatrix}.
 \end{equation*}

Denote $(\frac{\partial}{\partial v^{k}})^{\perp}=\frac{\partial}{\partial v^{k}}-\langle \frac{\partial}{\partial v^{k}},T\rangle T$. 
  For any $i,j\geq 2$, we have from (\ref{2.5})
 \begin{align*}
 \left\langle (\frac{\partial}{\partial v^{i}})^{\perp},(\frac{\partial}{\partial v^{j}})^{\perp}\right\rangle&=\frac{1}{G}G_{i\bar{j}}-\frac{1}{G^{2}}
 G_{i}G_{\bar{j}}\\
 &=\frac{\partial^{2}\log G}{\partial v^{i}\partial\bar{v}^{j}}
 =\frac{1}{|v^{1}|^{2}}\frac{\partial^{2}\log G}{\partial w^{i}\partial\bar{w}^{j}}.
 \end{align*}
 So with respect to the basis $\{T,(\frac{\partial}{\partial \zeta^{2}})^{\perp},\cdots,(\frac{\partial}{\partial \zeta^{r}})^{\perp}\}$, the matrix of $\langle\cdot,\cdot\rangle$ is
 \begin{equation*}
    \begin{bmatrix}
    \begin{BMAT}{c.c}{c.c}
    1 & 0\\
    0& \left(\frac{1}{|v^{1}|^{2}}\frac{\partial^{2}\log G}{\partial w^{\alpha}\partial\bar{w}^{\beta}}\right)_{1\leq\alpha,\beta\leq n-1}
    \end{BMAT}
    \end{bmatrix}.
 \end{equation*}
 Since $\{T,(\frac{\partial}{\partial v^{2}})^{\perp},\cdots,(\frac{\partial}{\partial v^{n}})^{\perp}\}$ and $\{T,\frac{\partial}{\partial v^{2}},\cdots,\frac{\partial}{\partial v^{n}}\}$ differ by an unitary matrix, so 
 $$\frac{1}{|v^1|^{2(n-1)}}\det\left(\frac{\p^2\log G}{\p w^{\alpha}\p\b{w}^{\beta}}\right)=\det\left(B(\frac{1}{G}G_{i\b{j}})\bar{B}^{T}\right)=|v^1|^2\frac{\det (G_{i\b{j}})}{G^{n}},$$
 which completes the proof.
 \end{proof}

Combining (\ref{2.6}) with Lemma \ref{9}, one has
\begin{align}\label{1.9}
e^{-\phi_L}=\frac{|v^k|^2\det(\p_{w^{\alpha}}\p_{\b{w}^{\beta}}\log G)}{G(\det G)(\det g)^{-1}}
\end{align}
on $U_k$.

It is known that $H^0(\mb{P}^{n-1},\mc{O}_{\mb{P}^{n-1}}(k))$ can be identified as the space of homogeneous polynomials of degree $k$ in $n$ variables. Therefore, the sections of $H^0(\mc{X}_z,\mc{O}_{P(TM)}(1)|_{\mc{X}_z})$ are of the form 
\begin{align}\label{2.3}
	u^i=v^i\left(v^j\pi^*\frac{\p}{\p z^j}\right)^*=\frac{v^i}{v^k} s_{\mc{O}_{P(TM)}(1)}=\frac{v^i}{v^k} dw\otimes s_L.
\end{align}
 
On the other hand, there is a canonical element 
\begin{align}\label{2.7}
a:=u^i\otimes \frac{\p}{\p z^i}=\left(v^i\pi^*\frac{\p}{\p z^i}\right)^*\otimes v^i\frac{\p}{\p z^i}\in E\otimes TM,
\end{align}
which is well-defined since $v^i$ and $\frac{\p}{\p z^i}$ are covariant each other. 

By (\ref{2.3}), (\ref{2.7}) and Theorem \ref{thm4}, we obtain
\begin{align}\label{1.7}
\Theta^{i\b{j}}_{~~i\b{j}}=\int_{\mc{X}_z}c(\phi)_{i\b{j}}\frac{v^i\o{v}^j}{|v^k|^2} e^{-\phi_L}(\sqrt{-1})^{(n-1)^2}dw\wedge d\b{w}+\langle(1+\Box')^{-1}i_{\b{\p}^V\frac{\delta}{\delta z^{i}}}u^i,i_{\b{\p}^V\frac{\delta}{\delta z^{j}}}u^j\rangle.
\end{align}

 Now let us compute the first term in the right-hand side of (\ref{1.7}). By (\ref{2.6}), (\ref{1.9}) and setting $\phi_G=\log G$, one has
\begin{align}\label{1.11}
\begin{split}
&\quad\int_{\mc{X}_z}c(\phi_L)_{i\b{j}}\frac{v^i\o{v}^j}{|v^k|^2} e^{-\phi_L}(\sqrt{-1})^{(n-1)^2}dw\wedge d\b{w}\\
&=\int_{\mc{X}_z}((n+1)c(\phi_G)_{i\b{j}}-\p_i\p_{\b{j}}\log \det g)\frac{v^i\b{v}^j}{G}\frac{|v^k|^{2n}\det g}{G^n}(\sqrt{-1})^{(n-1)^2}dw\wedge d\b{w}\\
&=\int_{\mc{X}_z}((n+1)c(\phi_G)_{i\b{j}}-\p_i\p_{\b{j}}\log \det g)\frac{v^i\b{v}^j}{G}\frac{\det g}{\det G}\frac{(\sqrt{-1}\p\b{\p}\log G)^{n-1}}{(n-1)!}\\
&=-(n+1)\int_{\mc{X}_z}R_{i\b{j}k\b{l}}\frac{v^i\b{v}^jv^k\b{v}^l}{G^2}\frac{\det g}{\det G}\frac{(\sqrt{-1}\p\b{\p}\log G)^{n-1}}{(n-1)!}\\
&\quad -\p_i\p_{\b{j}}\log\det g\int_{\mc{X}_z}\frac{v^i\b{v}^j}{G}\frac{\det g}{\det G}\frac{(\sqrt{-1}\p\b{\p}\log G)^{n-1}}{(n-1)!}.
\end{split}
\end{align}

Now the induced Hermitian metric on $T^*M$ is 
\begin{align}\label{1.10}
h^{\b{j}i}=\int_{\mc{X}_z}\frac{v^i\b{v}^j}{G}\frac{\det g}{\det G}\frac{(\sqrt{-1}\p\b{\p}\log G)^{n-1}}{(n-1)!}.
\end{align}

Denote by $h=(h_{i\b{j}})$  the dual Hermitian metric of $(h^{\b{j}i})$, and $\det h:=(\det(h^{\b{j}i}))^{-1}$.
Consider the induced metric on $T^*M$ given by (\ref{1.10}). We claim that by an appropriate rescaling of $g$ one can always reduce to the case in which $\det g=(\det h)^{-1}$. Indeed, in place of $g$  one can always consider the Hermitian metric
\begin{align}
\tilde{g}_{i\b{j}}=\left(\frac{\det h}{\det g}\right)^{\frac{1}{n(n+1)}}g_{i\b{j}}.
\end{align}
So there induces a Hermitian metric $\tilde{h}$ on $T^*M$  associated with the Hermitian metric $\tilde{g}=(\tilde{g}_{i\b{j}})$, which is given by (\ref{1.10}). Moreover, 
\begin{align}
\begin{split}
\det(\tilde{h}^{\b{j}i})&=\det\left(\int_{\mc{X}_z}\frac{v^i\b{v}^j}{G}\frac{\det \tilde{g}}{\det G}\frac{(\sqrt{-1}\p\b{\p}\log G)^{n-1}}{(n-1)!}\right)\\
&=\left(\frac{\det h}{\det g}\right)^{\frac{n}{n+1}}(\det h)^{-1}\\
&=\left(\left(\frac{\det h}{\det g}\right)^{\frac{1}{n+1}}\det g\right)^{-1}\\
&=(\det\tilde{g})^{-1}.
\end{split}
\end{align}

 For a Hermitian metric $h=(h_{i\b{j}})$ on $M$, the two scalar curvatures are defined by 
\begin{align}\label{1.12}
s_h=\Theta_{i\b{j}k\b{l}}h^{i\b{j}}h^{k\b{l}},\quad \hat{s}_h=\Theta_{i\b{j}k\b{l}}h^{i\b{l}}h^{k\b{j}},
\end{align}
where $\Theta_{i\b{j}k\b{l}}h^{i\b{j}}=-\frac{\p^2 h_{k\b{l}}}{\p z^i\p\b{z}^j}+h^{\b{q}p}\frac{\p h_{p\b{j}}}{\p\b{z}^l}\frac{\p h_{i\b{q}}}{\p z^k}$. 

From the chosen of Hermitian metric $g$ and (\ref{1.12}), (\ref{1.11}) reduces to 
\begin{align}\label{1.22}
\int_{\mc{X}_z}c(\phi_L)_{i\b{j}}\frac{v^i\o{v}^j}{|v^k|^2} e^{-\phi_L}(\sqrt{-1})^{(n-1)^2}dw\wedge d\b{w}=-\frac{n+1}{2}\int_{\mc{X}_z}K_G\frac{\det g}{\det G}\frac{(\sqrt{-1}\p\b{\p}\log G)^{n-1}}{(n-1)!}+s_h. 
\end{align}

Now we deal with the second term in the RHS of (\ref{1.7}). By Theorem (\ref{thm4}), $$\langle(1+\Box')^{-1}i_{\b{\p}^V\frac{\delta}{\delta z^{i}}}u^i,i_{\b{\p}^V\frac{\delta}{\delta z^{j}}}u^j\rangle \geq 0$$ and $\langle(1+\Box')^{-1}i_{\b{\p}^V\frac{\delta}{\delta z^{i}}}u^i,i_{\b{\p}^V\frac{\delta}{\delta z^{j}}}u^j\rangle=0$ if and only if 
\begin{align}\label{1.13}
\frac{\p ((\phi_L)_{\b{\beta};i}(\phi_L)^{\b{\beta}\alpha})}{\p\b{w}^\gamma}v^i=0.
\end{align}
From the definition of $\phi_L$ (\ref{2.6}) and $\det g=(\det g)(z)$, (\ref{1.13}) is equivalent to 
\begin{align}\label{1.20}
v^i\frac{\p}{\p\b{w}^{\gamma}}((\log G)_{\b{\beta};i}(\log G)^{\b{\beta}\alpha})=(\log G)^{\b{\beta}\alpha}v^i\frac{\delta_{\phi}}{\delta z^i}(\log G)_{\b{\beta}\b{\gamma}}=0,
\end{align}
where $\frac{\delta_{\phi}}{\delta z^i}$ is defined by (\ref{horizontal}). On the other hand, by definition of $\hat{G}$, $\hat{P}$ and Lemma \ref{1.111}, one has
\begin{align}
\begin{split}\label{1.21}
\frac{1}{G}(\b{\p}\hat{G})(\o{\hat{P}})&=\frac{\b{v}^l}{G}\frac{\delta}{\delta \b{z}^l}(G_{ij})\delta v^i\otimes \delta v^j\\
&=\b{v}^l\frac{\delta}{\delta \b{z}^l}((\log G)_{ij})\delta v^i\otimes \delta v^j\\
&=\b{v}^l\frac{\delta_{\phi}}{\delta \b{z}^l}((\log G)_{ij})\delta v^i\otimes \delta v^j\\
&=\b{v}^l\frac{\delta_{\phi}}{\delta \b{z}^l}\left(\frac{\p^2\log G}{\p w^{\alpha}\p w^{\beta}}\frac{\p w^{\alpha}}{\p v^i}\frac{\p w^{\beta}}{\p v^j}+\frac{\p\log G}{\p w^{\alpha}}\frac{\p^2 w^{\alpha}}{\p v^i\p v^j}\right)\delta v^i\otimes \delta v^j\\
&=\b{v}^l\frac{\delta_{\phi}}{\delta \b{z}^l}((\log G)_{\alpha\beta})\delta w^{\alpha}\otimes \delta w^{\beta},
\end{split}
\end{align}
where the third equality holds since $q_*(\frac{\delta}{\delta \b{z}^l})=\frac{\delta_{\phi}}{\delta \b{z}^l}$.

Combining (\ref{1.20}) and (\ref{1.21}), $\langle(1+\Box')^{-1}i_{\b{\p}^V\frac{\delta}{\delta z^{i}}}u^i,i_{\b{\p}^V\frac{\delta}{\delta z^{j}}}u^j\rangle=0$ if and only if the complex Finsler metric $G$ satisfies (\ref{1.2}). The curvature term $\Theta^{i\b{j}}_{~~i\b{j}}$ in (\ref{1.7}) is 
\begin{align}\label{1.23}
\Theta^{i\b{j}}_{~~i\b{j}}=-\Theta_{k\b{l}i\b{j}}h^{k\b{j}}h^{i\b{l}}=-\hat{s}_h.
\end{align}
Combining (\ref{1.7}), (\ref{1.22}) with (\ref{1.23}), one obtains
\begin{prop}\label{Prop2}
Let $G$ be a strongly pseudoconvex complex Finsler metric on complex manifold $M$. There exist  Hermitian metrics $h$ and $g$ on $M$ such that the sum of  two scalar curvatures satisfy
\begin{align}
s_h+\hat{s}_h\leq \frac{n+1}{2}\int_{\mc{X}_z}K_G\frac{\det g}{\det G}\frac{(\sqrt{-1}\p\b{\p}\log G)^{n-1}}{(n-1)!}.
\end{align} 
Moreover, the equality holds if and only if the Finsler metric satisfies (\ref{1.2}).
\end{prop}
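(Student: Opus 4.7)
The plan is to synthesize the computations carried out in the discussion preceding the proposition. First I set up the direct image: take $\mc{X} = P(TM)$ and $L = \mc{O}_{P(TM)}(1) - K_{\mc{X}/M}$, so that $E = \pi_{*}(K_{\mc{X}/M}+L) = \pi_{*}(\mc{O}_{P(TM)}(1)) = T^{*}M$ by (\ref{2.11}). From $G$ together with an auxiliary Hermitian metric $g$ on $M$, build the weight $\phi_{L}$ on $L$ via (\ref{2.6}), equivalently (\ref{1.9}); Lemma \ref{9} ensures that $\phi_{L}$ is smooth with $\sqrt{-1}\p\b{\p}\phi_{L}|_{\mc{X}_{z}}>0$, so Theorem \ref{thm4} is applicable.

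Next, apply Berndtsson's formula to the canonical global element $a = u^{i}\otimes\frac{\p}{\p z^{i}}\in E\otimes TM$ from (\ref{2.7}); contracting both sides of (\ref{cur}) with $a$ yields (\ref{1.7}). The geodesic-curvature integral on the right-hand side expands via the explicit form of $\phi_{L}$ into (\ref{1.11}), producing a term proportional to the holomorphic sectional curvature $K_{G}$ (via (\ref{defn22})) together with a $-\p_{i}\p_{\b{j}}\log\det g\cdot h^{\b{j}i}$ term. Pick $g$ by the self-consistent rescaling described after (\ref{1.10}), so that $\det g$ matches the determinant of the induced $L^{2}$-metric $h$ on $T^{*}M$; this converts the Ricci-of-$g$ contribution into $s_{h}$ and produces identity (\ref{1.22}). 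On the left-hand side, the trace $\Theta^{i\b{j}}_{\ \ i\b{j}}$ equals $-\hat{s}_{h}$ by (\ref{1.23}).

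Finally, the Bergman-kernel term in (\ref{1.7}) is nonnegative by the positivity half of Theorem \ref{thm4}, and vanishes iff the vertical holomorphicity condition (\ref{1.13}) holds. Translating this into Finslerian data uses $q_{*}\bigl(\frac{\delta}{\delta\b{z}^{l}}\bigr) = \frac{\delta_{\phi}}{\delta\b{z}^{l}}$ together with the identity (\ref{1.21}) expressing $\frac{1}{G}\b{\p}\hat{G}(\o{\hat{P}})$ in terms of $\b{v}^{l}\frac{\delta_{\phi}}{\delta\b{z}^{l}}((\log G)_{\alpha\beta})\delta w^{\alpha}\otimes\delta w^{\beta}$; this identifies the vanishing of the Bergman term with the Finsler condition (\ref{1.2}). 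Assembling the three pieces gives $-\hat{s}_{h}\geq s_{h}-\frac{n+1}{2}\int_{\mc{X}_{z}}K_{G}\frac{\det g}{\det G}\frac{(\sqrt{-1}\p\b{\p}\log G)^{n-1}}{(n-1)!}$, which is the claimed inequality together with its equality criterion. The main obstacle is the Finsler-to-Berndtsson dictionary used in the last step: one must carefully verify that the $G$-adapted horizontal lift $\frac{\delta}{\delta z^{i}}$ on $(TM)^{o}$ projects under $q$ to the $\phi_{L}$-adapted horizontal lift on $P(TM)$, and that the $v$-Hessian transcribes to the $w$-Hessian after quotienting by the $\mb{C}^{*}$-action, so that Berndtsson's vanishing criterion corresponds exactly to the condition on $\hat{G}$ appearing in (\ref{1.2}).
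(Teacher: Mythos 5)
Your proposal is correct and follows essentially the same route as the paper: applying Berndtsson's curvature formula (Theorem \ref{thm4}) to the canonical element $a=u^i\otimes\frac{\p}{\p z^i}$ with the weight $\phi_L$ of (\ref{2.6}), expanding the geodesic-curvature integral via Lemma \ref{9} and the rescaling of $g$ to obtain (\ref{1.22}), identifying the trace with $-\hat{s}_h$, and translating the vanishing of the nonnegative Bergman term into condition (\ref{1.2}) through (\ref{1.20})--(\ref{1.21}). No substantive differences from the paper's argument.
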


Let 
$$\omega_h=\sqrt{-1}h_{i\b{j}}dz^i\wedge d\b{z}^j$$
be the fundament form associate to the Hermitian metric $h=(h_{i\b{j}})$ on $M$. The first application of (\ref{Prop2}) as follows.
\begin{cor}
If $K_G\leq 0$ and $M$ is compact, then there exists a Hermitian metric $h$ such that
\begin{align}
\int_M\hat{s}_h\omega_h^n\leq 0.
\end{align}
\end{cor}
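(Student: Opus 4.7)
The plan is to combine Proposition \ref{Prop2} with Gauduchon's theorem. Since $K_G \leq 0$ while $\frac{\det g}{\det G}(\sqrt{-1}\p\b{\p}\log G)^{n-1}$ is a positive $(n-1,n-1)$-form on each fiber $\mc{X}_z$ of $P(TM)\to M$, the fiber integral on the right-hand side of Proposition \ref{Prop2} is pointwise non-positive on $M$. Hence $s_h+\hat{s}_h\leq 0$ pointwise, and integrating against $\omega_h^n$ on the compact $M$ gives
\begin{align*}
\int_M s_h\,\omega_h^n + \int_M \hat{s}_h\,\omega_h^n \leq 0.
\end{align*}

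The heart of the argument is to arrange $\int_M s_h\,\omega_h^n = 0$ by exploiting the flexibility in the auxiliary data. The key observation is that from (\ref{1.10}), $h^{\b{j}i}$ depends on $g$ only through the scalar $\det g$, so a conformal rescaling $\det g \mapsto e^{\phi}\det g$ induces a conformal rescaling $h\mapsto e^{-\phi}h$. By Gauduchon's theorem, every Hermitian metric on a compact complex manifold is conformally equivalent to a Gauduchon metric (one satisfying $\p\b{\p}\omega^{n-1}=0$), so one may choose $g$ so that the induced $\omega_h$ is Gauduchon. For such $h$, using $s_h\,\omega_h^n = n\operatorname{Ric}(\omega_h)\wedge\omega_h^{n-1}$ together with $\operatorname{Ric}(\omega_h) = -\sqrt{-1}\p\b{\p}\log\det h$, two applications of Stokes' theorem yield
\begin{align*}
\int_M s_h\,\omega_h^n = -n\int_M \sqrt{-1}\,\log\det h\cdot \p\b{\p}\omega_h^{n-1} = 0,
\end{align*}
and the desired inequality $\int_M \hat{s}_h\,\omega_h^n \leq 0$ follows immediately.

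The main subtlety, which I expect to be the principal obstacle, is that the normalization $\det g\cdot\det h = 1$ imposed in Proposition \ref{Prop2} uniquely determines $h$ from $G$ and therefore appears to kill the conformal freedom needed above. This is resolved by retracing (\ref{1.11})--(\ref{1.22}) \emph{without} imposing this normalization: the same computation then yields the slightly more general pointwise bound
\begin{align*}
\operatorname{tr}_h\operatorname{Ric}(g) + \hat{s}_h \leq \frac{n+1}{2}\int_{\mc{X}_z} K_G\frac{\det g}{\det G}\frac{(\sqrt{-1}\p\b{\p}\log G)^{n-1}}{(n-1)!},
\end{align*}
where $\operatorname{Ric}(g) = -\sqrt{-1}\p\b{\p}\log\det g$ and $\operatorname{tr}_h$ denotes trace with respect to $h$. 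The identical Gauduchon--Stokes computation then shows $\int_M \operatorname{tr}_h\operatorname{Ric}(g)\,\omega_h^n = 0$, and the conclusion $\int_M \hat{s}_h\,\omega_h^n \leq 0$ follows in the same way.
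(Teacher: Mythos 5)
Your first step coincides with the paper's: Proposition \ref{Prop2} together with $K_G\le 0$ and the positivity of $\frac{\det g}{\det G}(\sqrt{-1}\p\b{\p}\log G)^{n-1}$ along the fibres gives the pointwise inequality $s_h+\hat{s}_h\le 0$, hence $\int_M(s_h+\hat{s}_h)\,\omega_h^n\le 0$. The gap is in how you dispose of $\int_M s_h\,\omega_h^n$. The passage from $\int_M s_h\,\omega_h^n=-n\int_M\sqrt{-1}\,\p\b{\p}\log\det h\wedge\omega_h^{n-1}$ to $-n\int_M\sqrt{-1}\,\log\det h\cdot\p\b{\p}\omega_h^{n-1}$ is not a legitimate integration by parts: $\log\det h$ is not a globally defined function on $M$ (under a coordinate change $\det h$ picks up the factor $|\det(\p z/\p\tilde z)|^{2}$; it is the local weight of a metric on $K_M^{-1}$), and only the Chern--Ricci form $\p\b{\p}\log\det h$ is global. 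For a Gauduchon metric the quantity $\int_M \mathrm{Ric}(\omega_h)\wedge\omega_h^{n-1}$ is precisely the Gauduchon degree of $K_M^{-1}$ (equivalently $n^{-1}$ times the total Chern scalar curvature), and it is \emph{not} zero in general --- it is strictly positive already for the Fubini--Study metric on $\mb{P}^n$, which is K\"ahler and hence Gauduchon. The same objection defeats your fallback claim $\int_M\mathrm{tr}_h\mathrm{Ric}(g)\,\omega_h^n=0$, since $\log\det g$ is likewise only locally defined. So the vanishing on which your argument pivots is false, and the proof does not close.

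The step you are missing is supplied in the paper by a purely integral inequality rather than a normalization of $h$: the identity $s_h=\hat{s}_h+\langle\b{\p}\b{\p}^*\omega_h,\omega_h\rangle$ (formula (3.3) of Yang's paper \cite{Xiao}) integrates to $\int_M s_h\,\omega_h^n=\int_M\hat{s}_h\,\omega_h^n+\int_M|\b{\p}^*\omega_h|^2\,\omega_h^n\ge\int_M\hat{s}_h\,\omega_h^n$, whence $2\int_M\hat{s}_h\,\omega_h^n\le\int_M(s_h+\hat{s}_h)\,\omega_h^n\le 0$. No Gauduchon representative, no evaluation of $\int_M s_h\,\omega_h^n$, and no relaxation of the normalization $\det g=(\det h)^{-1}$ are needed; replacing your Gauduchon--Stokes step by this inequality repairs the argument.
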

 \begin{proof}
 	There is a relation between the two scalar curvatures of a Hermitian metric (see eg. \cite[Formula (3.3)]{Xiao})
 	\begin{align}
 	s_h=\hat{s}_h+\langle\b{\p}\b{\p}^*\omega_h,\omega_h\rangle.	
 	\end{align}
Integrating both sides of above equation, one gets
\begin{align}
\int_M \hat{s}_h\omega_h^n\leq 	\int_M \hat{s}_h\omega_h^n+\int_M |\b{\p}^*\omega_h|^2\omega_h^n= \int_M s_h\omega_h^n.
\end{align}
Combining Proposition \ref{Prop2} with the assumption $K_G\leq 0$, one has
\begin{align}
\int_{M}\hat{s}_h\omega_h^n\leq \frac{1}{2}\int_M (\hat{s}_h+s_h)\omega_h^n\leq \frac{n+1}{4}\int_{P(TM)}K_G\frac{\det g}{\det G}\frac{(\sqrt{-1}\p\b{\p}\log G)^{n-1}}{(n-1)!}\wedge \omega_h^n\leq 0.
\end{align}
 \end{proof}

\begin{thm}
Let $(M,G)$ be  a strongly pseudoconvex complex Finsler metric which satisfying (\ref{1.2}). If $K_G\geq 0$ and $K_G(p)>0$ for some $p\in P(TM)$, then
the Kodaira dimension  $\kappa(M)=-\infty$, i.e. $H^0(M, lK_M)=0$ for  $l\geq 1$.
\end{thm}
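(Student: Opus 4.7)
The plan is as follows. First, apply Proposition \ref{Prop2} with the hypothesis $\b\p\hat G(\o{\hat P}) = 0$ to exploit the equality case and extract positivity of the integrated Chern scalar curvature $\int_M s_h\,\omega_h^n$. Then, after arranging that $\omega_h$ is Gauduchon, derive a contradiction from any nontrivial pluricanonical section via the Lelong--Poincar\'e formula.

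Under the assumption (\ref{1.2}), Proposition \ref{Prop2} gives the pointwise equality
\begin{equation*}
s_h + \hat s_h = \frac{n+1}{2}\int_{\mc X_z}K_G\,\frac{\det g}{\det G}\,\frac{(\sqrt{-1}\p\b\p\log G)^{n-1}}{(n-1)!}.
\end{equation*}
Since $K_G \geq 0$ on $P(TM)$ and $K_G(p_0) > 0$ at some $p_0$, continuity makes the right-hand side nonnegative on $M$ and strictly positive on a neighborhood of $\pi(p_0)$, so $\int_M(s_h + \hat s_h)\,\omega_h^n > 0$. Combined with the Hermitian identity $s_h = \hat s_h + \langle\b\p\b\p^*\omega_h,\omega_h\rangle$ (already used in the proof of Corollary 3.3) and the fact that $\int_M\langle\b\p\b\p^*\omega_h,\omega_h\rangle\omega_h^n = \|\b\p^*\omega_h\|_{L^2}^2 \geq 0$, this gives
\begin{equation*}
\int_M s_h\,\omega_h^n \;\geq\; \tfrac{1}{2}\int_M(s_h + \hat s_h)\,\omega_h^n \;>\; 0.
\end{equation*}

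Next I would exploit the freedom in choosing $g$ to arrange $\omega_h$ to be Gauduchon, i.e., $\p\b\p\omega_h^{n-1} = 0$. A conformal rescaling $g \mapsto e^{2f}g$ induces a conformal rescaling of $h$ via (\ref{1.10}), and Gauduchon's theorem provides the required $f$, with some care needed to keep the normalization $\det g = (\det h)^{-1}$ intact that was imposed in the proof of Proposition \ref{Prop2}. This is the main technical step in the argument.

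Finally, suppose for contradiction that $0 \neq s \in H^0(M, lK_M)$ for some $l \geq 1$. With respect to the metric $(\det h)^{-l}$ induced on $K_M^{\otimes l}$ from $h$, the Lelong--Poincar\'e formula gives
\begin{equation*}
l\,\sqrt{-1}\p\b\p\log\det h = 2\pi[Z_s] - \sqrt{-1}\p\b\p\log|s|^2
\end{equation*}
as currents on $M$. Wedging with $\omega_h^{n-1}$ and integrating, the Gauduchon condition kills the $\sqrt{-1}\p\b\p$-exact term after integration by parts; using $\mathrm{Ric}(\omega_h) = -\sqrt{-1}\p\b\p\log\det h$ and $\mathrm{Ric}(\omega_h)\wedge\omega_h^{n-1} = \tfrac{s_h}{n}\omega_h^n$ one arrives at
\begin{equation*}
\int_M s_h\,\omega_h^n = -\frac{2\pi n}{l}\int_M [Z_s]\wedge\omega_h^{n-1} \;\leq\; 0,
\end{equation*}
contradicting the strict positivity above. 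Hence $H^0(M, lK_M) = 0$ for every $l \geq 1$, and $\kappa(M) = -\infty$. The principal obstacle is verifying the Gauduchon arrangement in the third paragraph, i.e., checking that the conformal freedom in $g$ is rich enough to simultaneously realize $\p\b\p\omega_h^{n-1} = 0$ and the normalization $\det g = (\det h)^{-1}$; everything else reduces to standard Hermitian and Chern--Weil bookkeeping.
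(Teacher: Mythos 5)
Your first paragraph reproduces the paper's own reduction: under (\ref{1.2}) Proposition \ref{Prop2} holds with equality, and the curvature hypothesis makes $s_h+\hat{s}_h\geq 0$ on $M$ and strictly positive near the base point of $p$. At that point the paper simply stops and invokes \cite{Balas} and \cite[Theorem 3.1]{Xiao}, which say exactly that a compact Hermitian manifold whose two scalar curvatures have semi-positive, somewhere positive sum satisfies $H^0(M,lK_M)=0$ for all $l\geq 1$; the proof there is an integrated Bochner inequality for the Riemannian Laplacian applied to $|\sigma|^{2/l}$ and needs no Gauduchon metric. So the substance of the theorem is Proposition \ref{Prop2}, and the place where your write-up diverges --- your attempt to reprove the Balas--Yang step by a Gauduchon degree argument --- is exactly where it does not close.

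The obstacle you flag in your third paragraph is a genuine obstruction, not merely a point needing ``care.'' From (\ref{1.10}), replacing $g$ by $\lambda g$ for a positive function $\lambda$ on $M$ multiplies $h^{\b{j}i}$ by $\lambda^{n}$, hence $\det h$ by $\lambda^{-n^2}$ and the product $\det g\cdot\det h$ by $\lambda^{\,n-n^2}$; for $n\geq 2$ the normalization $\det g=(\det h)^{-1}$ therefore selects a \emph{unique} representative in each conformal class of $g$ and leaves no conformal freedom whatsoever to arrange $\p\b{\p}\omega_h^{n-1}=0$. Nor can you first prove $\int_M s_h\,\omega_h^n>0$ for the normalized $h$ and then pass to a Gauduchon representative, since the total Chern scalar curvature is not a conformal invariant. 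Two repairs are available: (i) simply quote \cite{Balas} or \cite{Xiao} as the paper does; or (ii) drop the normalization, choose $\lambda$ so that $\omega_h$ is Gauduchon, and note that in (\ref{1.11}) the term $-h^{\b{j}i}\p_i\p_{\b{j}}\log\det g$ differs from the corresponding expression with $\log\det g=-\log\det h$ by $\mathrm{tr}_{\omega_h}$ of the globally $\p\b{\p}$-exact form $\sqrt{-1}\p\b{\p}\log(\det g\det h)$, whose integral against $\omega_h^{n-1}$ vanishes precisely by the Gauduchon condition; the integrated identity $\int_M(s_h+\hat{s}_h)\,\omega_h^n>0$ then survives, and your remaining steps (the inequality $\int_M s_h\,\omega_h^n\geq\tfrac12\int_M(s_h+\hat{s}_h)\,\omega_h^n$ and the Lelong--Poincar\'e degree computation) are correct and finish the proof.
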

\begin{proof}
	By Proposition \ref{Prop2} and $(\ref{1.2})$, so there exist  Hermitian metrics $h$ and $g$ on $M$ such that
	\begin{align}
		s_h+\hat{s}_h=\frac{n+1}{2}\int_{\mc{X}_z}K_G\frac{\det g}{\det G}\frac{(\sqrt{-1}\p\b{\p}\log G)^{n-1}}{(n-1)!}.
	\end{align}
	By the assumption of $K_G$, so $s_h+\hat{s}_h$ is also semi-positive and strict positive at some point in $M$. The same proof as in (\cite[Theorem 3.1]{Xiao}, \cite[Theorem 1]{Balas}), one has $H^0(M,lK_M)=0$ for $l\geq 1$. 
\end{proof}

\end{document}